\numberwithin{subcase}{case}
\newtheorem{theorem}{Theorem}[section]
\newtheorem{corollary}[theorem] {Corollary}
\newtheorem{definition}[theorem]{Definition}
\newtheorem{lemma} [theorem]{Lemma}
\newtheorem{remark}[theorem]{Remark}
\newtheoremstyle{case}{}{}{}{}{}{:}{ }{}
\theoremstyle{case}
\begin{document}
	\label{'ubf'}  
	\setcounter{page}{1} 
	\markboth
	{\hspace*{-9mm} \centerline{\footnotesize
			A note on necessary conditions for a friend of 10}}
	{\centerline{\footnotesize 
			T. Chatterjee, S. Mandal and S. Mandal
		} \hspace*{-9mm}}
	
	\vspace*{-2cm}
	\begin{center}
		
		{\textbf{A note on necessary conditions for a friend of 10}\\
			\vspace{.2cm}
			\medskip
			{\sc Tapas Chatterjee}\\
			{\footnotesize  Department of Mathematics,}\\
			{\footnotesize Indian Institute of Technology Ropar, Punjab, India.}\\
			{\footnotesize e-mail: {\it tapasc@iitrpr.ac.in}}
			\medskip
		
			{\sc Sagar Mandal}\\
			{\footnotesize  Department of Mathematics and Statistics,}\\
			{\footnotesize Indian Institute of Technology Kanpur, Uttar Pradesh, India.}\\
			{\footnotesize e-mail: {\it sagarm23@iitk.ac.in}}
			\medskip
            
			{\sc Sourav Mandal}\\
			{\footnotesize Department of Mathematics, }\\
			{\footnotesize RKMVERI, Belur Math, Howrah, West Bengal -711202, India.}\\
			{\footnotesize e-mail: {\it souravmandal1729@gmail.com}}
			\medskip}
		
	\end{center}
	\thispagestyle{empty} 
	\vspace{.5cm}
	
	\hrulefill
	\begin{abstract}  
		Solitary numbers are shrouded with mystery.  A folklore conjecture assert that 10 is a solitary number i.e. it has no friends. In this article,  we establish that if $N$ is a friend of $10$ then it must be odd square with at least seven distinct prime factors, with $5$ being the least one. Moreover there exists a prime factor $p$ of $N$ such that $2a+1\equiv 0 \pmod f$ and $5^{f}\equiv 1 \pmod p$ where $f$ is the smallest odd  positive integer greater than $1$ and less than or equal to $\min\{ 2a+1,p-1\}$, provided $5^{2a}\mid \mid N$. Further, there exist prime factors $p$ and $q$ (not necessarily distinct) of $N$ such that $p\equiv1 \pmod {10}$ and $q\equiv 1\pmod 6$. Besides, we prove that if a Fermat prime $F_k$ divides $N$ then $N$ must have a prime factor congruent to $1$ modulo $2F_k$. Also, if we consider the form of $N$ as $N=5^{2a}m^2$ then $m$ is non square-free. Furthermore, we show that  $\Omega(N)\geq 2\omega(N)+6a-4$ and if $\Omega(m)\leq K$ then $N< 5\cdot 6^{(2^{K-2a+1}-1)^2}$ where $\Omega(n)$ and $\omega(n)$ denote the total number of prime factors and the number of distinct prime factors of the integer $n$ respectively.
	\end{abstract}
	\noindent 
	{\textbf{Key words and phrases}:  Abundancy Index, Friendly Numbers, Solitary Numbers, Sum of Divisors.\\
		
		\noindent
		{\bf{Mathematics Subject Classification 2020:}} Primary: 11A25; Secondary: 05A18
		
		\vspace{.5cm}

\section{Introduction}
A positive integer $N>10$ is said to be a friend of 10 if $I(N)=I(10)=9/5$, where $I(N)$ is the abundancy index of $N$ which is defined as $I(N)=\sigma(N)/{N}$, where $\sigma(N)$ is the sum of positive divisors of $N$. Abundancy of any integer itself is an active area of research and attracted many mathematicians \cite{CMM, wp, rl}. In order to find one friend of 10, many authors gave necessary conditions for the existence of $N$. Unfortunately, we do not know a single friend so far. Lately, J. Ward proved that 
 \begin{theorem}[\cite{ward2008does}]
If $n$ is a friend of 10, then $n$ is a square with at least 6 distinct prime factors, the smallest being 5. Further, at least one of $n$’s prime factors must be congruent to 1 modulo 3, and appear in the prime power factorization of $n$ to a power congruent to 2 modulo 6. If there is only one such prime dividing $n$, then it appears to a power congruent to 8 modulo 18 in the factorization of
$n$.
\end{theorem}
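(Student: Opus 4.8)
The plan is to build everything on the multiplicativity of the abundancy index together with the single equation $9N=5\sigma(N)$ coming from $I(N)=I(10)=9/5$, reading off each structural constraint by a prime-by-prime (i.e. $p$-adic) analysis of this equation. The standing tools are $I(p^a)=\sigma(p^a)/p^a=(p^{a+1}-1)/(p^a(p-1))$, the monotonicity $I(p^a)\uparrow p/(p-1)$ as $a\to\infty$, and the resulting global estimate $I(N)=\prod_{p\mid N}I(p^{v_p(N)})<\prod_{p\mid N}p/(p-1)$, which I will use repeatedly to turn ``too few or too small primes'' into numerical contradictions.

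\emph{Parity and squareness.} Since $9/5$ is in lowest terms, $9N=5\sigma(N)$ forces $5\mid N$. To see $N$ is odd, suppose $2^a\,\|\,N$ with $a\ge1$ and factor $I(N)=I(2^a)\,I(u)$ with $u$ odd; then $I(u)=\tfrac{9}{5}\cdot\tfrac{2^a}{2^{a+1}-1}=\tfrac95\cdot\frac{1}{2-2^{-a}}$, which is $<1$ for every $a\ge3$ and hence impossible. The two survivors $a=1,2$ give $I(u)=6/5$ and $I(u)=36/35$; the first pins $u$ down to $u=5$ (so $N=10$, excluded), and the second forces $35\mid u$, whence $I(u)\ge I(5)I(7)=48/35>36/35$, a contradiction. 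Thus $N$ is odd, so $9N$ is odd, so $\sigma(N)$ is odd; as $\sigma(n)$ is odd precisely for squares and twice-squares, the odd $N$ is a perfect square.

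\emph{Smallest prime and the count.} To exclude $3\mid N$: squareness gives $3^{2c}\,\|\,N$, and $I(3^{2c})\ge I(3^2)=13/9$ forces the complementary factor $I(N/3^{2c})\le 81/65$; since $c\ge2$ already pushes this below $I(5^2)=31/25$ (impossible, as $5^2\mid N/3^{2c}$), we get $c=1$ and $I(N/9)=81/65$ exactly. But $81/65$ in lowest terms forces $13\mid N$, and then $I(N/9)\ge I(5^2)I(13^2)=\tfrac{31}{25}\cdot\tfrac{183}{169}>81/65$, a contradiction; hence $3\nmid N$ and $5$ is the least prime factor. For the number of distinct primes, the crude bound $\prod_{p\mid N}p/(p-1)>9/5$ over admissible primes $5,7,11,13,17,\dots$ only yields $\omega(N)\ge5$ (the five smallest already give $\approx1.846$), so the genuinely hard step is upgrading this to $\omega(N)\ge6$: one must rule out every $5$-prime support, a finite list since $\prod p/(p-1)>9/5$ bounds the primes, by the same mechanism as above --- using the exact value $9/5$ to force auxiliary prime divisors through the lowest-terms denominator of the residual index and then contradicting the induced lower bound on $I$.

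\emph{The mod $3$ conditions.} Taking $3$-adic valuations in $9N=5\sigma(N)$ with $3\nmid N$ gives $v_3(\sigma(N))=2$. Reducing $\sigma(p^{2b_p})=1+p+\cdots+p^{2b_p}$ modulo $3$ shows $3\mid\sigma(p^{2b_p})$ exactly when $p\equiv1\pmod3$ and $b_p\equiv1\pmod3$, i.e. the exponent $2b_p\equiv2\pmod6$; since $v_3(\sigma(N))=2>0$, at least one such prime occurs. If exactly one does, then $v_3(\sigma(p^{2b_p}))=2$, and the lifting-the-exponent lemma gives $v_3(\sigma(p^{2b_p}))=v_3\big((p^{2b_p+1}-1)/(p-1)\big)=v_3(2b_p+1)$, so $v_3(2b_p+1)=2$, i.e. $9\mid 2b_p+1$ and thus the exponent $2b_p\equiv8\pmod{18}$. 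I expect the upgrade from $\omega(N)\ge5$ to $\omega(N)\ge6$ (together with the finite case-checking it entails) to be the main obstacle, the remaining assertions being direct consequences of multiplicativity, parity, and the valuation computations above.
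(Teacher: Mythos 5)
First, a point of comparison: the paper does not prove this statement at all --- it is quoted from Ward \cite{ward2008does} as known background, and the paper's own work (Theorem \ref{thm 1.2} onward) builds on top of it. So there is no internal proof to measure you against; I am judging your argument on its own terms.

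Most of what you wrote is correct and is the standard route. The exclusion of even $N$ via $I(2^a)=2-2^{-a}$ and the two surviving cases $a=1,2$; the deduction that $\sigma(N)$ is odd, hence the odd $N$ is a perfect square; the elimination of the prime $3$ by splitting into $c\ge2$ (numerical contradiction) and $c=1$ (where $I(N/9)=81/65$ forces $13\mid N$ and then overshoots); and the mod-$3$ analysis giving $p\equiv1\pmod 3$ with exponent $\equiv2\pmod 6$, upgraded to $\equiv 8\pmod{18}$ via $v_3(\sigma(N))=2$ and lifting the exponent --- all of this is sound, and the last step agrees with the paper's Lemma \ref{lem1} in the case $o_3(p)=1$. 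The genuine gap is the central claim $\omega(N)\ge 6$. As you yourself observe, the bound $I(N)<\prod_{p\mid N}p/(p-1)$ only yields $\omega(N)\ge 5$, since for instance $\{5,7,11,13,17\}$ gives $\prod p/(p-1)\approx 1.846>9/5$. You then assert that the admissible five-prime supports can be ruled out ``by the same mechanism as above,'' but that mechanism (lowest-terms denominators forcing an extra prime, then a lower bound on $I$) does not suffice in general: eliminating each surviving support requires tracking which primes of the support can divide each $\sigma(p_i^{2a_i})$ and deriving forbidden auxiliary prime divisors, i.e.\ exactly the order-of-$q$-mod-$p$ machinery the paper develops in Theorems \ref{thm 1.3} and \ref{thm 1.7} and Remark \ref{re 3.1}, and exactly the kind of multi-case analysis (nineteen chains, with tables of forced prime divisors of $\sigma(p_6^{2a_6})$) that the paper must carry out merely to push $6$ up to $7$. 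That finite but substantial case analysis is the real content of Ward's theorem; your proposal leaves it entirely unexecuted, so as written it establishes every assertion of the statement except the count, for which it proves only $\omega(N)\ge 5$.
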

In this paper, we improve the above result by the following theorem:

\begin{theorem}\label{thm 1.2}
    If $N$ is a friend of $10$, then $N$ it has at least $7$ distinct prime factors. Further, there exists prime factor $p$ and $q$ not necessarily distinct of $N$ such that $p\equiv1 \pmod {10}$ and $q\equiv 1\pmod 6$.
\end{theorem}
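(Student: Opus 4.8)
The plan is to convert the single rational equation $I(N)=\sigma(N)/N=9/5$, i.e. $5\,\sigma(N)=9N$, into $\ell$-adic valuation data at the primes $\ell=3,5$ together with one two-sided size estimate. By Ward's Theorem~1.1 we may write $N=5^{2a}\prod_{i=2}^{k}p_i^{2b_i}$ as an odd square with all $p_i\ge 7$ and $3\nmid N$. Comparing $v_\ell(5\,\sigma(N))$ with $v_\ell(9N)$ then gives at once $v_5(\sigma(N))=2a-1\ge 1$ and $v_3(\sigma(N))=2$, which will drive both the congruence statements and (via the abundancy bound) the factor count.

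For the congruences I would argue prime by prime. Since $\sigma$ is multiplicative, $\sigma(N)=\sigma(5^{2a})\prod_i\sigma(p_i^{2b_i})$ and $v_5(\sigma(5^{2a}))=v_5\!\left((5^{2a+1}-1)/4\right)=0$. For a prime $p\ne 5$ with $p\not\equiv 1\pmod 5$, the order of $p$ modulo $5$ divides $4$ and exceeds $1$, hence is even; as $\sigma(p^{2b})=(p^{2b+1}-1)/(p-1)$ with $5\nmid p-1$ and $2b+1$ odd, this even order cannot divide $2b+1$, so $v_5(\sigma(p^{2b}))=0$. Thus only primes $p\equiv 1\pmod 5$ contribute to $v_5(\sigma(N))$; as this valuation is $\ge 1$, some prime factor satisfies $p\equiv 1\pmod 5$, and being odd it is $\equiv 1\pmod{10}$. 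The identical argument modulo $3$ (orders divide $2$, and $v_3(\sigma(N))=2>0$) produces a prime factor $q\equiv 1\pmod 3$, necessarily $\equiv 1\pmod 6$. This settles the second assertion.

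For the factor count I would assume $\omega(N)=6$, say $5=q_1<\cdots<q_6$, and use the two-sided bound $\prod_i I(q_i^2)\le \tfrac95\le\prod_i q_i/(q_i-1)$, valid because $I(q_i^{2b_i})$ lies between $I(q_i^2)$ and $q_i/(q_i-1)$. The right inequality forces the small primes down: since $q_2$ is least among $q_2,\dots,q_6$, one gets $\tfrac54\left(q_2/(q_2-1)\right)^5\ge \tfrac95$, hence $(q_2/(q_2-1))^5\ge 36/25$ and $q_2\le 14$, so $q_2\in\{7,11,13\}$; iterating the same estimate bounds $q_3,q_4,q_5$ in turn, as long as the partial product of the $q_j/(q_j-1)$ stays below $9/5$. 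This leaves finitely many possibilities for the five smallest primes. For each, either $q_6$ is likewise bounded, giving a finite check against the exact equation, the forced membership of primes $\equiv 1\pmod{10}$ and $\equiv 1\pmod 6$, and the left inequality $\prod I(q_i^2)\le \tfrac95$; or $q_6$ is unconstrained above, in which case I return to $5\,\sigma(N)=9N$ and observe that $v_{q_6}(\sigma(q_6^{2b_6}))=0$, so $\sum_{i<6}v_{q_6}(\sigma(q_i^{2b_i}))=2b_6>0$, forcing $q_6\mid\sigma(q_i^{2b_i})$ for some $i<6$ and thereby pinning $q_6$ to the smaller primes and their exponents.

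The main obstacle is exactly this last step: the exponents $b_i$ are a priori unbounded, so finiteness of the search is not automatic, and one must control the largest prime and the exponents simultaneously by coupling the divisibility $q_6\mid\sigma(q_i^{2b_i})$ with the $5$-adic identity $\sum_{q_i\equiv 1(5)}v_5(2b_i+1)=2a-1$ from the first part. The size estimates and the order/lifting-the-exponent bookkeeping are routine; the genuine difficulty lies in organizing the finitely many residual prime configurations and proving that none of them admits integer exponents solving $5\,\sigma(N)=9N$ exactly.
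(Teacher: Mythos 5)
Your reduction of the two congruence claims to valuation bookkeeping is correct and is essentially what the paper does: $5\sigma(N)=9N$ with $3\nmid N$ gives $v_5(\sigma(N))=2a-1\ge 1$ and $v_3(\sigma(N))=2$, and since a prime $p\not\equiv 1\pmod 5$ (resp.\ $q\not\equiv 1\pmod 3$) has even order modulo $5$ (resp.\ modulo $3$), which cannot divide the odd exponent $2b+1$, only primes $\equiv 1\pmod 5$ (resp.\ $\equiv 1\pmod 3$) can contribute; this recovers the paper's Remark 3.2 and Corollary 1.8. Your setup for the factor count is also the paper's: assume $\omega(N)=6$, squeeze the five smallest primes with $\prod_i I(q_i^2)\le 9/5<\prod_i q_i/(q_i-1)$ to get finitely many configurations, and then attack each with divisibility constraints of the form $q_6\mid\sigma(q_i^{2b_i})$.

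The genuine gap is that you stop exactly where the proof begins. The finite list of configurations (the paper finds $19$ ``chains'' of admissible $(q_2,\dots,q_6)$) is not eliminated by the exact equation plus routine bookkeeping; each chain needs a bespoke argument, and collectively these occupy almost the entire paper. The tools are: (i) a Fermat-prime obstruction (if $17\mid N$ then some prime factor must be $\equiv 1\pmod{34}$, killing several chains outright); (ii) order computations showing, e.g., that $13$ divides no $\sigma(p_i^{2a_i})$ for any admissible $p_i$, contradicting $13\mid N\mid\sigma(N)\cdot(\text{unit})$; and (iii) cascading implications of the form ``if $11\mid\sigma(5^{2a})$ then $71\mid\sigma(5^{2a})$'', ``if $19\mid\sigma(5^{2a})$ then $829\mid\sigma(5^{2a})$'', which force prime factors of $\sigma(N)$ outside the allowed set $\{3,5,q_2,\dots,q_6\}$. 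The hardest chain ($5,7,11,13,29,p_6$ with $p_6\le 20731$) requires intersecting congruences modulo $390$ and then checking some sixty residual values of $p_6$ against tables of companion primes. Your closing observation that $q_6$ must divide some $\sigma(q_i^{2b_i})$ does not by itself ``pin'' $q_6$, because the exponents $b_i$ are unbounded and $\sigma(q_i^{2b_i})$ can acquire arbitrarily large prime factors; what actually closes each case is that the smallest auxiliary prime forced into $\sigma(q_i^{2b_i})$ by the order structure (Theorem 1.3 / Corollary 1.4 of the paper) already falls outside the six allowed primes. Without carrying out this elimination, the claim $\omega(N)\ge 7$ is not proved.
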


The proof of the above theorem requires many technical theorems and lemmas. We begin with those results and subsequently reach our main goal.

 \begin{theorem}\label{thm 1.3}
 Let $p,q$ be two distinct prime numbers with  $p^{k-1}\mid\mid (q-1)$ where $k$ is some positive integer. Then $p$ divides $\sigma(q^{2a})$ if and only if $2a+1 \equiv\ 0 \pmod f$ where $f$ is the smallest odd positive integer greater than $1$ such that $q^{f}\equiv 1 \pmod {p^{k}}$. 
\end{theorem}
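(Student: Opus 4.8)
The plan is to reduce the divisibility question to a statement about the multiplicative order of $q$ modulo $p^{k}$ by way of a $p$-adic valuation computation. First I would write $\sigma(q^{2a})=1+q+\cdots+q^{2a}=(q^{2a+1}-1)/(q-1)$. Letting $v_p$ denote the $p$-adic valuation, the hypothesis $p^{k-1}\mid\mid(q-1)$ says exactly that $v_p(q-1)=k-1$. Since the valuation is additive over the (integer) quotient, $v_p\big(\sigma(q^{2a})\big)=v_p(q^{2a+1}-1)-v_p(q-1)=v_p(q^{2a+1}-1)-(k-1)$. Hence $p\mid\sigma(q^{2a})$ if and only if $v_p(q^{2a+1}-1)\ge k$, that is, if and only if $q^{2a+1}\equiv1\pmod{p^{k}}$. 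This single equivalence packages both regimes $k=1$ and $k\ge2$ at once and is the technical heart of the argument.

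Next I would introduce $D:=\operatorname{ord}_{p^{k}}(q)$, the multiplicative order of $q$ modulo $p^{k}$, which is well defined because $\gcd(q,p)=1$ as $p\ne q$. The congruence $q^{2a+1}\equiv1\pmod{p^{k}}$ holds precisely when $D\mid(2a+1)$. Because $p^{k}\nmid(q-1)$ we have $q\not\equiv1\pmod{p^{k}}$, so $D>1$. Moreover, since $2a+1$ is odd, the divisibility $D\mid(2a+1)$ can hold only when $D$ itself is odd.

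Finally I would identify $f$ with $D$. By definition $f$ is the smallest odd integer $>1$ in the set $\{m:q^{m}\equiv1\pmod{p^{k}}\}$, which is exactly the set of positive multiples of $D$. If $D$ is odd then, since $D>1$, the smallest odd multiple of $D$ exceeding $1$ is $D$ itself, so $f=D$; consequently $p\mid\sigma(q^{2a})\iff D\mid(2a+1)\iff 2a+1\equiv0\pmod f$, as claimed. If instead $D$ is even, then every multiple of $D$ is even, so no admissible $f$ exists; correspondingly $D\mid(2a+1)$ fails for every $a$ (an even number cannot divide an odd one), so $p\nmid\sigma(q^{2a})$ and both sides of the biconditional are simultaneously false, consistently with the stated equivalence.

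The steps are largely mechanical; the one place that needs genuine care — and where I expect the main obstacle — is pinning down $D$ and its parity in the ramified regime $k\ge2$. There I would argue that if $q=1+p^{k-1}u$ with $p\nmid u$, then for odd $p$ the binomial expansion gives $q^{m}\equiv1+mp^{k-1}u\pmod{p^{k}}$ (every higher term carries a factor $p^{2(k-1)}$ with $2(k-1)\ge k$), whence $q^{m}\equiv1\pmod{p^{k}}$ iff $p\mid m$, i.e. $D=p$, which is odd and forces $f=p$; this recovers the familiar lifting-the-exponent identity $v_p\big(\sigma(q^{2a})\big)=v_p(2a+1)$. The parity bookkeeping in the unramified regime $k=1$, where $D=\operatorname{ord}_p(q)$ may be even and $f$ may fail to exist, is the only other point demanding attention.
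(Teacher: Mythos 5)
Your proof is correct and follows essentially the same route as the paper's: both reduce $p\mid\sigma(q^{2a})$ to the congruence $q^{2a+1}\equiv 1\pmod{p^{k}}$ using $v_p(q-1)=k-1$, and then translate that congruence into divisibility of $2a+1$ by the order of $q$ modulo $p^{k}$. If anything, your write-up is slightly more careful than the paper's, since you make the valuation step and the identification of $f$ with $\operatorname{ord}_{p^{k}}(q)$ (including the degenerate case where that order is even and no admissible odd $f$ exists) fully explicit.
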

We use the notation $f_{P}^{Q}$ to denote $f$ which depends on the prime numbers $p=P$ and $q=Q$ in Theorem \ref{thm 1.3}.
\begin{corollary}\label{coro 1.4}
Let $p$, $p^*$ and $q$ be three distinct prime numbers with $p^{k-1}\mid \mid (q-1)$ and $p^{*k^*-1}\mid \mid (q-1)$ $(k,k^*\in \mathbb{Z^{+}}$) also let $p \mid \sigma(q^{2a})$ and $p^* \mid \sigma(q^{2a^*})$( a,$a^*$$\in \mathbb{Z^{+}}$) with $f_{p}^{q}$, $f_{p^*}^{q}$ respectively in Theorem \ref{thm 1.3}. If $f_{p^*}^{q}\mid f_{p}^{q}$ then $pp^*\mid \sigma(q^{2a})$.  
\end{corollary}

\begin{corollary}\label{coro 1.5}
Let $N$ be a friend of $10$ with $5^{2a}\mid \mid N$, $a\in \mathbb{Z^+}$. Then there exists a prime factor $p$ of $N$ such that $2a+1\equiv 0 \pmod f$ and $5^{f}\equiv 1 \pmod p$ where $f$ is the smallest odd  positive integer greater than $1$ and less than or equal to $\min\{ 2a+1,p-1\}$.
\end{corollary}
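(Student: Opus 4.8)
The plan is to exhibit a prime factor $p$ of $N$ that also divides $\sigma(5^{2a})$ and then read off the conclusion from Theorem \ref{thm 1.3} applied with $q=5$. Since $N$ is a friend of $10$ we have $\sigma(N)=\tfrac{9}{5}N$; writing $N=5^{2a}M$ with $\gcd(M,5)=1$, multiplicativity of $\sigma$ gives $\sigma(5^{2a})\,\sigma(M)=9\cdot 5^{2a-1}M$, so in particular $\sigma(5^{2a})$ divides $9\cdot 5^{2a-1}M$. First I would record that $\sigma(5^{2a})=1+5+\cdots+5^{2a}$ is coprime to each of $2$, $3$ and $5$: it is a sum of $2a+1$ odd terms, hence odd; it is $\equiv 1\pmod 5$; and modulo $3$ the terms satisfy $5^i\equiv(-1)^i$, so the sum telescopes to $\sum_{i=0}^{2a}(-1)^i=1$, giving $3\nmid\sigma(5^{2a})$.

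Given this coprimality, any prime $p\mid\sigma(5^{2a})$ satisfies $p\notin\{2,3,5\}$ and is therefore coprime to $9\cdot 5^{2a-1}$; from $p\mid\sigma(5^{2a})\mid 9\cdot 5^{2a-1}M$ I conclude $p\mid M$, hence $p\mid N$. Since $a\geq 1$ we have $\sigma(5^{2a})>1$, so at least one such prime $p$ exists, and I fix it. This is the key step: the point is that the prime factors of $\sigma(5^{2a})$ cannot be absorbed by the numerator $9$ of the abundancy index, precisely because $3\nmid\sigma(5^{2a})$.

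Next I apply Theorem \ref{thm 1.3} with $q=5$. As $p$ is odd it does not divide $q-1=4$, so the condition $p^{k-1}\mid\mid(q-1)$ forces $k=1$. Theorem \ref{thm 1.3} then yields $2a+1\equiv 0\pmod f$ together with $5^f\equiv 1\pmod p$, where $f$ is the smallest odd integer greater than $1$ with $5^f\equiv 1\pmod p$. It remains to verify $f\leq\min\{2a+1,\,p-1\}$. From $f\mid 2a+1$ and $f>1$ we immediately get $f\leq 2a+1$. For the other bound, let $d$ denote the multiplicative order of $5$ modulo $p$; then $5^{2a+1}\equiv 1\pmod p$ gives $d\mid 2a+1$, so $d$ is odd, and $d>1$ since $p\nmid 5-1$. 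Thus $d$ is an odd integer greater than $1$ with $5^d\equiv 1\pmod p$, so $f\leq d$ by minimality, while $5^f\equiv 1\pmod p$ forces $d\mid f$ and hence $d\leq f$; therefore $f=d\mid p-1$, giving $f\leq p-1$.

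I expect the only genuine obstacle to be establishing the coprimality of $\sigma(5^{2a})$ with $3$, since this is exactly what guarantees that a prime factor of $\sigma(5^{2a})$ survives into $N$ rather than being cancelled against the factor $9$ in $\tfrac{9}{5}N$; once that is in hand, the reduction $k=1$ and the two order-theoretic bounds are routine.
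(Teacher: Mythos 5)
Your proposal is correct and follows essentially the same route as the paper: find a prime factor $p$ of $N$ dividing $\sigma(5^{2a})$, then apply Theorem \ref{thm 1.3} with $q=5$ and $k=1$ (since $p\nmid 4$), obtaining $f\mid 2a+1$ and $f\mid p-1$. The only difference is that you justify the existence of such a $p$ explicitly by checking $\gcd(\sigma(5^{2a}),30)=1$, a point the paper passes over by appealing to Remark \ref{re 3.1}; your version is the more careful one.
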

\begin{corollary}\label{coro 1.6}
  Let $N$ be a friend of $10$. If any Fermat prime $F_{k}=2^{2^{k}}+1$ ($k\in \mathbb{Z}_{\geq0}$) divides $N$ then there exists a prime factor $p$ of $N$ such that $p\equiv 1 \pmod {2F_{k}}$.
\end{corollary}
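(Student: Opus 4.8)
The plan is to exploit the defining feature of a Fermat prime, namely that $F_{k}-1=2^{2^{k}}$ is a power of $2$, so that the multiplicative group $(\Z/F_{k}\Z)^{*}$ is a $2$-group and its only element of odd order is the identity. By the theorem of Ward stated above, a friend $N$ of $10$ is an odd perfect square whose least prime factor is $5$, and it satisfies $5\sigma(N)=9N$. Assuming $F_{k}\mid N$, I would write $N=F_{k}^{2b}M$ with $\gcd(M,F_{k})=1$ and $b\geq 1$ (the exponent is even since $N$ is a square). The target is a prime factor $p$ of $M$, hence of $N$, with $p\equiv 1\pmod{F_{k}}$; as every prime factor of $N$ is odd we automatically have $p\equiv 1\pmod 2$, and the Chinese Remainder Theorem then upgrades this to the desired $p\equiv 1\pmod{2F_{k}}$.

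First I would show that $F_{k}\mid\sigma(M)$. Since $\sigma$ is multiplicative, $\sigma(N)=\sigma(F_{k}^{2b})\sigma(M)$, and $\sigma(F_{k}^{2b})=1+F_{k}+\cdots+F_{k}^{2b}\equiv 1\pmod{F_{k}}$, so $F_{k}\nmid\sigma(F_{k}^{2b})$. Reading off the exponent of $F_{k}$ (the $F_{k}$-adic valuation $v_{F_{k}}$) on both sides of $5\,\sigma(F_{k}^{2b})\sigma(M)=9\,F_{k}^{2b}M$ then forces the whole of $F_{k}^{2b}$ into $\sigma(M)$: when $F_{k}\geq 17$ the factors $5$ and $9$ are coprime to $F_{k}$, giving $v_{F_{k}}(\sigma(M))=2b$, while when $F_{k}=5$ the lone factor $5$ on the left accounts for a single power, giving $v_{5}(\sigma(M))=2b-1$. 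In either case $b\geq 1$ yields $F_{k}\mid\sigma(M)=\prod_{p_{i}\neq F_{k}}\sigma(p_{i}^{2a_{i}})$, so there is a prime $p_{i}\mid M$ with $F_{k}\mid\sigma(p_{i}^{2a_{i}})$.

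The crux is to promote $F_{k}\mid\sigma(p_{i}^{2a_{i}})$ into $p_{i}\equiv 1\pmod{F_{k}}$, and this is precisely where Theorem~\ref{thm 1.3} (applied with its ``$p$'' equal to $F_{k}$ and its ``$q$'' equal to $p_{i}$) meets the Fermat structure. By Theorem~\ref{thm 1.3}, the divisibility $F_{k}\mid\sigma(p_{i}^{2a_{i}})$ guarantees that the associated odd integer $f=f_{F_{k}}^{p_{i}}>1$ exists with $p_{i}^{\,f}\equiv 1\pmod{F_{k}}$. Hence $\mathrm{ord}_{F_{k}}(p_{i})$ divides the odd number $f$; but it also divides $\lvert(\Z/F_{k}\Z)^{*}\rvert=2^{2^{k}}$, so being at once a power of $2$ and a divisor of an odd number it must equal $1$, i.e.\ $p_{i}\equiv 1\pmod{F_{k}}$. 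One can also bypass Theorem~\ref{thm 1.3}: if $p_{i}\not\equiv 1\pmod{F_{k}}$ then $\gcd(p_{i}-1,F_{k})=1$, so $F_{k}\mid\sigma(p_{i}^{2a_{i}})=\frac{p_{i}^{\,2a_{i}+1}-1}{p_{i}-1}$ would give $p_{i}^{\,2a_{i}+1}\equiv 1\pmod{F_{k}}$, making the even order $\mathrm{ord}_{F_{k}}(p_{i})$ divide the odd number $2a_{i}+1$, a contradiction.

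The only genuinely delicate point I anticipate is the valuation bookkeeping in the second paragraph, where the case $F_{k}=5$ must be separated because there $F_{k}$ coincides with the denominator $5$ of the abundancy index $9/5$; one checks that this single factor of $5$ cannot absorb all of $5^{2b}$, which holds since $b\geq 1$ leaves a surplus power of $5$ in $\sigma(M)$. Beyond that, the entire argument is driven by one structural fact, that a Fermat prime has a $2$-power multiplicative group, so that the odd cyclotomic condition coming from $\sigma$ of an odd power collapses to the trivial order and thereby pins down a prime factor of $N$ congruent to $1$ modulo $2F_{k}$.
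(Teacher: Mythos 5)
Your proposal is correct and rests on the same key fact as the paper's proof: since $\phi(F_k)=2^{2^k}$ is a power of $2$, the odd order condition forced by $F_k\mid\sigma(p_i^{2a_i})$ collapses to $p_i\equiv 1\pmod{F_k}$, and oddness of $p_i$ upgrades this to modulus $2F_k$. The only (minor) difference is organizational: you locate the prime $p_i$ with $F_k\mid\sigma(p_i^{2a_i})$ by explicit $F_k$-adic valuation bookkeeping on $5\sigma(N)=9N$, whereas the paper argues by contradiction and invokes its Remark~\ref{re 3.1} for the same purpose.
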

\begin{theorem}\label{thm 1.7}
If $p$ and $q$ are two distinct prime numbers with $q>p$, then $p$ divides $\sigma(q^{2a}),~a \in\mathbb{Z^{+}}$ if and only if for $r\neq 1$, $q\equiv r \pmod p$ and $2a+1 \equiv 0 \pmod f$ where $f$ is the smallest odd positive integer  greater than $1$ such that $r^f\equiv 1 \pmod p$ and for $r=1$, $q\equiv r \pmod p$ and $2a+1\equiv 0 \pmod p$.
\end{theorem}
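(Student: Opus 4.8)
The plan is to reduce everything to the explicit geometric-sum expression for $\sigma(q^{2a})$ and work directly modulo $p$. Since $p$ and $q$ are distinct primes we have $\gcd(p,q)=1$, so $q$ is a unit modulo $p$ and $r$ is well defined with $r\not\equiv 0\pmod p$. The first step is to observe
$$\sigma(q^{2a})=\sum_{i=0}^{2a}q^{i}\equiv\sum_{i=0}^{2a}r^{i}\pmod p,$$
which turns the divisibility question $p\mid\sigma(q^{2a})$ into a congruence purely in $r$. From here I would split into the two cases exactly as in the statement.

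For the case $r=1$ the sum collapses: each $r^{i}\equiv 1$, so $\sigma(q^{2a})\equiv 2a+1\pmod p$, and therefore $p\mid\sigma(q^{2a})$ if and only if $2a+1\equiv 0\pmod p$, which is the claimed condition. This case is immediate and requires no order computation. For the case $r\neq 1$ the element $r-1$ is invertible modulo $p$, so I would use $\sum_{i=0}^{2a}r^{i}=(r^{2a+1}-1)/(r-1)$ to conclude that $p\mid\sigma(q^{2a})$ if and only if $r^{2a+1}\equiv 1\pmod p$. Writing $d=\mathrm{ord}_{p}(r)$, this is equivalent to $d\mid 2a+1$.

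The final step is to identify $d$ with the quantity $f$ in the statement. Because $2a+1$ is odd, $d\mid 2a+1$ forces $d$ to be odd, and since $r\neq 1$ we have $d>1$; conversely, as the exponents killing $r$ are exactly the multiples of $d$, when $d$ is odd the smallest odd integer exceeding $1$ among them is $d$ itself, so $f=d$. Thus $p\mid\sigma(q^{2a})\iff 2a+1\equiv 0\pmod f$, and the backward implication follows from $r^{2a+1}=(r^{f})^{(2a+1)/f}\equiv 1\pmod p$. I note that this case is precisely the $k=1$ instance of Theorem \ref{thm 1.3} (here $q\equiv r\pmod p$ gives $q^{f}\equiv 1\pmod p$ iff $r^{f}\equiv 1\pmod p$), so one may alternatively invoke that theorem directly, but the elementary geometric-series argument is self-contained.

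The only genuinely delicate point---and the place I would be most careful---is the existence and correct identification of $f$. When $d$ is even there is no odd exponent $f>1$ with $r^{f}\equiv 1\pmod p$, so $f$ is simply undefined; this is consistent, since an even $d$ cannot divide the odd number $2a+1$ and hence $p\nmid\sigma(q^{2a})$. Making explicit that $f$ exists if and only if $d$ is odd, and that in that case $f=d$, is what ties the two sides of the equivalence together cleanly. Everything else is a routine finite geometric-series computation.
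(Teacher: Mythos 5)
Your proposal is correct and follows essentially the same route as the paper: reduce $\sigma(q^{2a})$ to the geometric sum $\sum_{i=0}^{2a}r^{i}$ modulo $p$, treat $r=1$ by direct counting, and for $r\neq 1$ convert $p\mid\sigma(q^{2a})$ into $r^{2a+1}\equiv 1\pmod p$ and hence into a divisibility condition on the order of $r$. The only divergence is cosmetic --- the paper routes the converse for $r\neq 1$ back through Theorem \ref{thm 1.3} while you close the equivalence directly from the geometric series, and your explicit remark that $f$ exists precisely when $\mathrm{ord}_{p}(r)$ is odd (in which case $f=\mathrm{ord}_{p}(r)$) makes a point the paper leaves implicit.
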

\begin{corollary}\label{coro 1.8}
If $N$ is a friend of $10$ then $N$ has a prime factor $p$ such that $p\equiv1 \pmod6$.
\end{corollary}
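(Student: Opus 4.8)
The plan is to locate, inside the multiplicative factorization of $\sigma(N)$, a single prime-power factor that is divisible by $3$ and then to read off from Theorem~\ref{thm 1.7} (applied with the test prime $3$) the residue of the corresponding prime factor of $N$. First I would record the elementary arithmetic consequence of $N$ being a friend of $10$: from $I(N)=\sigma(N)/N=9/5$ we get $\sigma(N)=9N/5$, and since the smallest prime factor of $N$ is $5$ (Theorem~1.1), we have $5\mid N$, so $N/5$ is a positive integer. Hence $9\mid\sigma(N)$, and in particular $3\mid\sigma(N)$.

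Next, because $N$ is a perfect square its factorization has the form $N=\prod_{q} q^{2a_q}$ over its distinct prime factors $q$, and multiplicativity of $\sigma$ gives $\sigma(N)=\prod_{q}\sigma(q^{2a_q})$. As $3$ is prime and divides this product, there must exist a prime factor $q$ of $N$ with $3\mid\sigma(q^{2a_q})$. Every prime factor of $N$ is at least $5$, so $q>3$, and I may invoke Theorem~\ref{thm 1.7} with $p=3$ and this $q$.

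The decisive step is to eliminate the residue class $q\equiv 2\pmod 3$. Writing $q\equiv r\pmod 3$ as in Theorem~\ref{thm 1.7}, the only options are $r=1$ and $r=2$. If $r=2$, the theorem demands the smallest odd integer $f>1$ with $r^{f}\equiv 1\pmod 3$; but $2\equiv-1\pmod 3$, so $2^{f}\equiv(-1)^{f}$, which equals $1$ only for even $f$. No odd $f$ exists, so the case $r=2$ cannot yield $3\mid\sigma(q^{2a_q})$. This is the crux of the proof: the multiplicative order of $2$ modulo $3$ is even, which makes the odd-exponent condition of Theorem~\ref{thm 1.7} unsatisfiable. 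Therefore the only surviving possibility is $r=1$, i.e. $q\equiv 1\pmod 3$.

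Finally, since $N$ is odd the chosen prime $q$ is odd, so $q\equiv 1\pmod 2$; together with $q\equiv 1\pmod 3$ this gives $q\equiv 1\pmod 6$, and taking $p=q$ finishes the argument. I expect the main (indeed essentially the only) obstacle to be the parity observation in the third step; once one notices that $2$ has even order modulo $3$, the exclusion of $q\equiv 2\pmod 3$ is immediate and the remainder is routine bookkeeping.
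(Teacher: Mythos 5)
Your proposal is correct and follows essentially the same route as the paper: deduce $9\mid\sigma(N)$ from $\sigma(N)=9N/5$ with $5\mid N$, locate a prime factor $q$ of $N$ with $3\mid\sigma(q^{2a_q})$ via multiplicativity, apply Theorem~\ref{thm 1.7} with $p=3$, and rule out $q\equiv 2\pmod 3$ because $2\equiv-1\pmod 3$ has even multiplicative order, so no odd $f>1$ satisfies $2^f\equiv 1\pmod 3$. The final step (oddness of $q$ upgrading $q\equiv 1\pmod 3$ to $q\equiv 1\pmod 6$) also matches the paper exactly.
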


\begin{theorem}\label{thm 1.9}
    If $N=5^{2a}m^2$ is a friend of $10$ where $a,m \in \mathbb{N}$ then $m$ is non squarefree.
\end{theorem}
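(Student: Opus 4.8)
The plan is to assume, for contradiction, that $m$ is squarefree and to derive an impossible power of $5$ in $\sigma(N)$. By Theorem \ref{thm 1.2} the least prime factor of $N$ is $5$ and $N$ is a square, so we may normalise the factorisation so that $5\nmid m$; then $5^{2a}\mid\mid N$ and, by multiplicativity of $\sigma$, $\sigma(N)=\sigma(5^{2a})\,\sigma(m^{2})$.

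First I would read off $v_5(\sigma(N))$ directly from the defining relation $5\sigma(N)=9N$ of a friend of $10$. Comparing $5$-adic valuations on both sides gives
\[
v_5(\sigma(N)) = v_5(N) - 1 = 2a - 1,
\]
which is positive since $a\ge 1$. Because $\sigma(5^{2a})=1+5+\cdots+5^{2a}\equiv 1 \pmod 5$ is coprime to $5$, the entire factor $5^{2a-1}$ must be carried by $\sigma(m^{2})$; that is, $v_5(\sigma(m^{2}))=2a-1\ge 1$.

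The heart of the argument is to show this is impossible once $m$ is squarefree. Writing $m=p_1\cdots p_t$ with distinct primes $p_i\ne 5$, multiplicativity gives $\sigma(m^{2})=\prod_{i=1}^{t}(1+p_i+p_i^{2})$, so it suffices to prove $5\nmid 1+p+p^{2}$ for every prime $p$. This is where the exponent $2$ is decisive: since $1+p+p^{2}=(p^{3}-1)/(p-1)$, a divisibility $5\mid 1+p+p^{2}$ with $p\not\equiv 1 \pmod 5$ would force $p$ to have multiplicative order $3$ modulo $5$, which cannot happen because $3\nmid |\mathbb{F}_5^{\times}|=4$ (equivalently, the discriminant $-3$ is a non-residue mod $5$); and for $p\equiv 1 \pmod 5$ one has $1+p+p^{2}\equiv 3 \pmod 5$. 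Hence every factor is coprime to $5$, so $v_5(\sigma(m^{2}))=0$, contradicting $v_5(\sigma(m^{2}))=2a-1\ge 1$. Therefore $m$ is non-squarefree.

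I expect the only real subtlety to be the choice of the right prime rather than any difficulty in the estimates. The natural $3$-adic computation (using $9\mid\sigma(N)$) only pins down the number of prime factors congruent to $1$ modulo $3$ and yields no contradiction; the clean obstruction appears at the prime $5$, precisely because squarefreeness forces every factor of $\sigma(m^{2})$ into the rigid shape $1+p+p^{2}$. For a higher power $p^{2e}$ the factor becomes $\sigma(p^{2e})=(p^{2e+1}-1)/(p-1)$, which can be divisible by $5$, so it is exactly the escape offered by a repeated prime that reconciles $\sigma(N)$ with the required valuation $2a-1$. The one piece of bookkeeping to keep honest is the normalisation $5\nmid m$, which is legitimate by Theorem \ref{thm 1.2}.
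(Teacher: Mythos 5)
Your proof is correct and takes essentially the same route as the paper: both reduce to the fact that $5$ must divide $\sigma(m^2)=\prod_i(1+p_i+p_i^2)$ (since $5\,\sigma(N)=9N$ while $\sigma(5^{2a})\equiv 1\pmod 5$) and then show $5\nmid 1+p+p^2$ for every prime $p$. The only cosmetic difference is that the paper outsources this last step to its Theorem \ref{thm 1.7} and Remark \ref{remark3.4} (forcing $p\equiv 1\pmod{10}$ and then $v_5(\sigma(p^2))=v_5(3)=0$), whereas you verify it directly via the order of $p$ modulo $5$.
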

  
 The following theorem helps us to count the total number of prime divisors of any friend of 10. Here is the statement of the theorem. 
  
  \begin{theorem}\label{thm 1.10}
    If $N=5^{2a}m^2$ is a friend of 10 where $a,m \in \mathbb{N}$ then $\Omega(N)\geq 2\omega(N)+6a-4$.
  \end{theorem}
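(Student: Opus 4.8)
The plan is to convert the single multiplicative constraint $I(N)=9/5$ into an arithmetic statement about the $5$-adic valuation of $\sigma(m^2)$, and then to control how large the exponents $b_i$ in the factorization of $m$ must be in order to produce that valuation.

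First I would fix notation. Since $N$ is an odd square whose least prime factor is $5$ (by the results quoted above) and $5^{2a}\,\|\,N$, I write $N=5^{2a}\prod_{i=2}^{k}p_i^{2b_i}$ with $p_2,\dots,p_k$ distinct primes coprime to $5$ and $k=\omega(N)$. Then $\Omega(N)=2a+2\sum_{i=2}^{k}b_i$, so the claimed inequality $\Omega(N)\ge 2\omega(N)+6a-4$ becomes, after a direct rearrangement, the cleaner estimate $\sum_{i=2}^{k}(b_i-1)\ge 2a-1$. This is the inequality I would actually establish.

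The central step is a valuation count. From $I(N)=9/5$ one gets $5\,\sigma(N)=9N$, and comparing the exponent of $5$ on both sides, while noting that $\sigma(5^{2a})=(5^{2a+1}-1)/4$ is coprime to $5$, forces $v_5\!\left(\sigma(m^2)\right)=2a-1$, where $v_5$ denotes the $5$-adic valuation. Next I would identify exactly which primes feed this valuation. Since $\sigma(m^2)=\prod_{i=2}^{k}\sigma(p_i^{2b_i})$ and each factor equals $(p_i^{2b_i+1}-1)/(p_i-1)$, a Lifting-the-Exponent computation gives $v_5(\sigma(p_i^{2b_i}))=v_5(2b_i+1)$ when $p_i\equiv 1\pmod 5$, whereas for $p_i\not\equiv 1\pmod 5$ the order of $p_i$ modulo $5$ is even and so cannot divide the odd number $2b_i+1$, making that factor coprime to $5$. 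Hence only the primes $p_i\equiv1\pmod5$ contribute, and setting $c_i:=v_5(2b_i+1)\ge 1$ for those primes yields $\sum_{p_i\equiv1\,(5)}c_i=2a-1$.

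To finish, I would turn each contribution $c_i$ into a lower bound on $b_i$: from $5^{c_i}\mid 2b_i+1$ we get $b_i\ge(5^{c_i}-1)/2$, and the elementary inequality $5^{c}\ge 4c+1$ (an easy induction for $c\ge1$) gives $b_i-1\ge 2c_i-1\ge c_i$. Summing over the contributing primes yields $\sum_{i=2}^{k}(b_i-1)\ge\sum_{p_i\equiv1\,(5)}(b_i-1)\ge\sum c_i=2a-1$, which is precisely what was needed. I expect the main obstacle to be the valuation bookkeeping in the central step: correctly arguing that $\sigma(5^{2a})$ is a $5$-unit so that the entire valuation $2a-1$ is carried by $\sigma(m^2)$, and then rigorously excluding every prime $p_i\not\equiv1\pmod5$ via the order argument. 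Once that characterization is in hand, the concluding inequality is routine.
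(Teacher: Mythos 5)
Your proposal is correct and follows essentially the same route as the paper: extract $v_5(\sigma(m^2))=2a-1$ from $5\sigma(N)=9N$, show via the order/LTE argument that only primes $p_i\equiv 1\pmod 5$ contribute with $c_i=v_5(2b_i+1)$ summing to $2a-1$, and then convert $5^{c_i}\mid 2b_i+1$ into the exponent lower bound. The only difference is cosmetic: where the paper enumerates cases over $|\mathcal{E}_5(N)|$ and invokes a lemma on the minimum of the partition set $\mathcal{L}_{2a-1,5}$, you achieve the same count with the single per-term inequality $5^{c}\geq 4c+1$, which is a cleaner packaging of the identical estimate.
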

 
 As an immediate corollary, we get the following upper-bound of $N$. 
  
\begin{corollary}\label{coro 1.11}
     If $N=5^{2a}m^2$ is a friend of 10 where $a,m \in \mathbb{N}$ and $\Omega(m)\leq K$ then $N< 5\cdot 6^{(2^{K-2a+1}-1)^2}$.
\end{corollary}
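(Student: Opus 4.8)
The plan is to split the statement into a purely combinatorial reduction, which is the part that is genuinely \emph{immediate} from Theorem~\ref{thm 1.10}, and a size estimate that converts a bound on the \emph{number} of prime factors into a bound on $N$ itself. For the reduction I would first record the bookkeeping coming from the factorisation $N=5^{2a}m^{2}$ with $\gcd(m,5)=1$ (forced since $5$ is the least prime factor and $5^{2a}\mid\mid N$, with $a\ge 1$). Here $\omega(N)=1+\omega(m)$ and $\Omega(N)=2a+2\Omega(m)$, so Theorem~\ref{thm 1.10} reads $2a+2\Omega(m)\ge 2(1+\omega(m))+6a-4$, which simplifies to $\Omega(m)-\omega(m)\ge 2a-1$. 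Combined with the hypothesis $\Omega(m)\le K$ this yields $\omega(m)\le K-2a+1$; writing $s:=K-2a+1$, the integer $N$ therefore has at most $s+1$ distinct prime factors, while $\Omega(m)\le K$ caps the multiplicities.

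Next I would turn ``few primes together with abundancy exactly $9/5$'' into an explicit ceiling for $N$. The crucial observation is arithmetic rather than analytic: since $I(N)=9/5$ we have $9N=5\sigma(N)$, and because $3\nmid N$ and $5^{2a}\mid\mid N$ with $a\ge 1$, every prime $p\mid N$ in fact divides $\sigma(N)=\prod_{q^{e}\mid\mid N}\sigma(q^{e})$. As $\sigma(p^{e})\equiv 1\pmod p$, the prime $p$ cannot divide its own factor, so $p\mid\sigma(q^{e})$ for some prime power $q^{e}\mid\mid N$ with $q\ne p$, whence $p\le\sigma(q^{e})<\tfrac{q}{q-1}q^{e}\le\tfrac54 q^{e}$. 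Applied to the \emph{largest} prime factor of $N$, this pins it below $\sigma$ of one of the smaller prime powers.

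I would then iterate this inequality. Ordering the at most $s+1$ primes of $N$ and feeding the estimate $p<\tfrac54 q^{e}$ back into itself produces a doubly-exponential recursion in the index of the prime; summing the resulting geometric progression of exponents, $\sum_{j}2^{\,j-1}=2^{s}-1$, together with the multiplicity bound $\Omega(m)\le K$ and the extra square coming from $m^{2}$, should collapse to $N<5\cdot 6^{(2^{s}-1)^{2}}$, the base $6$ comfortably absorbing the constant $\tfrac54$ and the leading prime $5$.

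The main obstacle is precisely this last iteration. The divisibility argument controls cleanly only the largest prime in terms of the smaller ones, because for an \emph{intermediate} prime the witnessing factor $\sigma(q^{e})$ may belong to a larger prime, giving an inequality in the wrong direction; making the recursion uniform therefore requires peeling the primes off from the top while tracking how the exponents $e$ enter the bound. It is here that the structural congruences of Theorems~\ref{thm 1.3}--\ref{thm 1.7}, which pin down the order $f$ and hence the admissible exponents, are likely needed to stop the multiplicities from inflating the estimate and to land exactly on the closed form $(2^{K-2a+1}-1)^{2}$ with base $6$.
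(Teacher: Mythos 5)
Your first paragraph matches the paper's reduction: from Theorem~\ref{thm 1.10} and $\Omega(m)\le K$ one extracts a bound on the number of distinct prime factors of $N$. (One discrepancy: your correct computation gives $\Omega(m)\ge\omega(m)+2a-1$, hence $\omega(N)=1+\omega(m)\le K-2a+2$, whereas the paper's inequality (\ref{ap}) asserts $\Omega(m)\ge\omega(N)+2a-1$ and so gets $\omega(N)\le K-2a+1$; with your bound you would only land on the weaker exponent $(2^{K-2a+2}-1)^2$.)

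The genuine gap is in the second half. The conversion of ``odd, abundancy $9/5$, at most $k$ distinct prime factors'' into the explicit ceiling $N<5\cdot 6^{(2^{k}-1)^2}$ is not something the paper proves: it is Lemma~\ref{pn}, Nielsen's theorem that an odd $m/d$-perfect number with $k$ distinct prime factors is less than $d(d+1)^{(2^k-1)^2}$, invoked as a black box with $m/d=9/5$, $d=5$. Your proposed substitute --- iterating $p\le\sigma(q^e)<\tfrac54 q^e$ from the largest prime downward and summing a geometric progression of exponents --- does not close, and you say so yourself: for an intermediate prime the witnessing factor $\sigma(q^e)$ may belong to a \emph{larger} prime, so the recursion points the wrong way; moreover nothing in your sketch bounds the exponents $e$ from above (Theorems~\ref{thm 1.3}--\ref{thm 1.7} give congruence conditions on the exponents, not upper bounds, so they cannot ``stop the multiplicities from inflating''). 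The doubly-exponential shape $(2^k-1)^2$ with base $d+1$ is the content of a substantial separate paper and cannot be recovered by the argument outlined. To complete the proof you should simply cite Lemma~\ref{pn} at this point, as the paper does.
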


  Before going into the proves, it is convenient to establish some notations and useful lemmas.

      \section{Preliminaries}
      In this section, we note some lemmas that will play a significant role in proving our main results.

\textbf{Properties of the Abundancy Index }\cite{rl,paw}.
 \begin{enumerate}
    \item $I(n)$ is weakly multiplicative, that is, if $n$ and $m$ are two coprime positive integers then $I(nm)=I(n)I(m)$.
    \item\label{p2} Let $a,n$ be two positive integers and $a>1$. Then $I(an)>I(n)$.
\item Let $p_1$, $p_2$, $p_3$,..., $p_m$ be $m$ distinct primes and $a_1$, $a_2$, $a_3$ ,..., $a_m$ be positive integers then
\begin{align*}
    I\biggl (\prod_{i=1}^{m}p_i^{a_i}\biggl)=\prod_{i=1}^{m}\biggl(\sum_{j=0}^{a_i}p_i^{-j}\biggl)=\prod_{i=1}^{m}\frac{p_i^{a_i+1}-1}{p_i^{a_i}(p_i-1)}.
\end{align*}
\item\label{p4} If $p_{1}$,...,$p_{m}$ are distinct primes, $q_{1}$,...,$q_{m}$ are distinct primes such that  $p_{i}\leq q_{i}$ for all $1\leq i\leq m$. If $t_1,t_2,...,t_m$ are positive integers then
\begin{align*}
   I \biggl(\prod_{i=1}^{m}p_i^{t_i}\biggl)\geq I\biggl(\prod_{i=1}^{m}q_i^{t_i}\biggl).
\end{align*}
\item\label{p5}  If $n=\prod_{i=1}^{m}p_i^{a_i}$, then $I(n)<\prod_{i=1}^{m}\frac{p_i}{p_i-1}$.
\end{enumerate}

      \textbf{Definition and Notation}\\

 Let $q$ be an odd prime. Define
    \begin{align*}
        \mathcal{E}_{q}(x)=\{p : \text{$p$ odd prime}, p \mid x, q\mid \sigma(p^\eta), \eta \geq 2, \eta~ \text{even}\}
        \end{align*}
    and  $|\mathcal{E}_q(N)|$ denotes the cardinality of $\mathcal{E}_q(N)$.\\\\
    Let  $$\mathcal{A}_{n, a}(r):=\left \{\sum_{i=1}^{r}a^{c_i}-r :\sum_{i=1}^{r} c_i =n, c_i \in \mathbb{N} \right \}$$ and $$\mathcal{L}_{n,a}:=\bigcup_{r=1}^{n}\mathcal{A}_{n,a}(r).$$
   
   \textbf{Notation:}
    \begin{itemize}
   \item $v_p(N)$ is denoted for $p^{v_{p}(N)}~\|~N$.
        \item $o_q(p)$ is denoted for the multiplicative order of p modulo q.
    \end{itemize}

\begin{lemma}\label{lem1}
    Let $p$ and $q$ be primes, $q \geq 3$, and $a \in \mathbb{N}$ then
\begin{align*}
    v_{q}(\sigma(p^a))=  \left\{
\begin{array}{ll}
      v_{q}(p^{o_q(p)}-1)+v_{q}(a+1) & \text{if~} o_{q}(p) \mid (a+1) \text{~and~} o_{q}(p) \neq 1, \\
      v_{q}(a+1) & \text{if~} o_{q}(p)=1\\
      0 & \text{otherwise.} \\
\end{array} 
\right.
\end{align*}
\end{lemma}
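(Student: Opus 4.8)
The plan is to reduce the claim to the Lifting the Exponent (LTE) lemma applied to the closed form $\sigma(p^a)=\frac{p^{a+1}-1}{p-1}$. Since $q$-adic valuation is additive on products and quotients, I would first record
\[
v_q(\sigma(p^a)) = v_q(p^{a+1}-1) - v_q(p-1).
\]
After this reduction the entire problem is the bookkeeping of $q$-adic valuations of numbers of the shape $p^{n}-1$, which is precisely what LTE controls for an odd prime $q$. Before invoking it, I would dispose of the degenerate case $p=q$: then each of $p,p^2,\dots,p^a$ is divisible by $q$, so $\sigma(p^a)\equiv 1\pmod q$ and $v_q(\sigma(p^a))=0$; moreover $o_q(p)$ is undefined, so this sits in the ``otherwise'' branch. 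Henceforth assume $p\neq q$, so that $p$ is a unit modulo $q$ and $d:=o_q(p)$ is well defined and divides $q-1$.

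Next I would split on the value of $d=o_q(p)$. If $o_q(p)=1$, then $q\mid p-1$, and LTE applied to $p^{a+1}-1$ (base $p$, with $1$) gives $v_q(p^{a+1}-1)=v_q(p-1)+v_q(a+1)$; subtracting $v_q(p-1)$ leaves exactly $v_q(a+1)$, matching the second branch. If $d\neq 1$, then $q\nmid p-1$, so $v_q(p-1)=0$ and $v_q(\sigma(p^a))=v_q(p^{a+1}-1)$. Here a further split is needed: when $d\nmid a+1$ we have $q\nmid p^{a+1}-1$, so the valuation is $0$ (the ``otherwise'' branch); when $d\mid a+1$ I would write $a+1=dt$ and apply LTE to $(p^{d})^{t}-1$ (whose base $p^{d}\equiv 1\pmod q$), obtaining $v_q(p^{a+1}-1)=v_q(p^{d}-1)+v_q(t)$.

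The one step that requires a little thought — and which I regard as the main (if mild) obstacle — is converting $v_q(t)$ into $v_q(a+1)$ so as to recover the stated formula $v_q(p^{o_q(p)}-1)+v_q(a+1)$. This is where the structural fact $d=o_q(p)\mid q-1$ is essential: it forces $\gcd(d,q)=1$, hence $v_q(d)=0$, and therefore $v_q(a+1)=v_q(d)+v_q(t)=v_q(t)$. Substituting this back completes the first branch and hence the full case analysis. Everything apart from this valuation-matching identity is a direct application of LTE, so I expect the proof to be short once the cases are laid out cleanly.
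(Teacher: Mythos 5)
Your proposal is correct and complete: the reduction $v_q(\sigma(p^a))=v_q(p^{a+1}-1)-v_q(p-1)$, the separate treatment of $p=q$, and the two applications of lifting-the-exponent (to base $p$ when $o_q(p)=1$, and to base $p^{o_q(p)}$ when $o_q(p)\neq 1$) together with the observation that $v_q(o_q(p))=0$ cover every branch of the stated formula, and the hypothesis $q\geq 3$ is exactly what licenses LTE. The paper itself gives no proof of this lemma, deferring entirely to the cited references of Voight and Nielsen, so there is nothing internal to compare against; your argument is the standard one found in that literature and would serve as a self-contained replacement for the citation.
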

\begin{proof}
See \cite{John,nielsen2007odd} for its proof. 
\end{proof}

\begin{lemma}\label{lem3.3}
    Let a function $\psi: [1,\infty)	\rightarrow\mathbb{R}$ defined by $\psi(x)=ax-a^x$ then $\psi$ is a strictly decreasing function of $x$ in $[1,\infty)$ for all $a>e$.
\end{lemma}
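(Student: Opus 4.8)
The plan is to differentiate $\psi$ and show that its derivative is strictly negative throughout $[1,\infty)$; this is the standard sufficient condition for a function to be strictly decreasing. Since $\psi(x)=ax-a^{x}$ is a smooth function of $x$ on $[1,\infty)$ (a sum of a linear term and an exponential), I may differentiate term by term to obtain $\psi'(x)=a-a^{x}\ln a$. The whole problem then reduces to establishing the single inequality $a^{x}\ln a>a$ for every $x\geq 1$, under the hypothesis $a>e$.

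To prove that inequality I would factor the left-hand side into two independently controlled pieces. First, because $a>e$, taking logarithms gives $\ln a>1$. Second, because $a>1$ and $x\geq 1$, the exponential $a^{x}$ is increasing in $x$, so $a^{x}\geq a^{1}=a$. Multiplying these two bounds yields $a^{x}\ln a\geq a\cdot\ln a>a\cdot 1=a$, where the final strict inequality is exactly where the hypothesis $a>e$ (equivalently $\ln a>1$) is consumed. Hence $\psi'(x)=a-a^{x}\ln a<0$ for all $x\in[1,\infty)$.

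Finally, since $\psi$ is continuous on $[1,\infty)$ and has strictly negative derivative on that interval, the standard monotonicity criterion (an immediate consequence of the mean value theorem: for $1\leq x_{1}<x_{2}$ one has $\psi(x_{2})-\psi(x_{1})=\psi'(\xi)(x_{2}-x_{1})<0$ for some $\xi\in(x_{1},x_{2})$) shows that $\psi$ is strictly decreasing on $[1,\infty)$, which is the assertion of the lemma.

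There is essentially no serious obstacle in this argument; the only point requiring care is the role of the hypothesis $a>e$. It is precisely the condition $\ln a>1$ that forces $a^{x}\ln a$ to dominate $a$ already at the left endpoint $x=1$. For $a\leq e$ one would instead get $\psi'(1)=a(1-\ln a)\geq 0$, so the conclusion would genuinely fail near $x=1$; thus the restriction $a>e$ is not merely convenient but sharp for the stated monotonicity.
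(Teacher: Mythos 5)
Your proof is correct and follows the same route as the paper: compute $\psi'(x)=a-a^{x}\log a$ and observe it is negative on $[1,\infty)$ when $a>e$. You merely spell out the justification (namely $a^{x}\geq a$ and $\log a>1$) that the paper dismisses with ``clearly,'' so there is nothing to change.
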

\begin{proof}
     Let $\psi: [1,\infty)	\rightarrow\mathbb{R}$ is defined by $\psi(x)=ax-a^x$, then clearly
     \begin{align*}
         \psi'(x)=a-a^{x} \log a < 0; ~~~~\forall x \in [1,\infty)~ \text{and}~ \forall a > e.
            \end{align*}
            Hence, $\psi$ is a strictly decreasing function of $x$ in $[1,\infty)$ for all $a>e$.
\end{proof}
\begin{lemma}\label{lem3.4}
    Let $(c_{1}, c_{2}, ... , c_{k})
    $ be any partition of $n$ i.e; $\sum_{i=1}^{k} c_{i}=n$ for $1 \leq k \leq n$ and all $c_{i} \in \mathbb{N}$, then for any integer $a>e$ we have $$an < \sum_{i=1}^{k} a^{c_i}.$$
  
\end{lemma}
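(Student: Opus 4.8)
The plan is to reduce the claimed inequality to a term-by-term comparison that is powered by Lemma~\ref{lem3.3}. Since $\sum_{i=1}^{k} c_i = n$, I would rewrite the right-hand target as $an = a\sum_{i=1}^{k} c_i = \sum_{i=1}^{k} a c_i$. The whole statement then collapses to comparing $\sum_{i=1}^{k} a^{c_i}$ against $\sum_{i=1}^{k} a c_i$ summand by summand, so it suffices to establish the single-variable inequality $a^{c} \ge a c$ for each part $c = c_i \ge 1$, together with strictness information.

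To obtain that single-variable bound I would invoke Lemma~\ref{lem3.3}: the function $\psi(x) = ax - a^x$ is strictly decreasing on $[1,\infty)$ for every integer $a > e$. Because $\psi(1) = a - a = 0$, strict monotonicity forces $\psi(x) \le 0$ for all $x \ge 1$, with equality occurring precisely at $x = 1$. Evaluating at each integer part gives $a c_i - a^{c_i} = \psi(c_i) \le 0$, i.e. $a^{c_i} \ge a c_i$, and this inequality is strict as soon as $c_i \ge 2$. Summing over $i = 1,\dots,k$ immediately yields $\sum_{i=1}^{k} a^{c_i} \ge \sum_{i=1}^{k} a c_i = an$.

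The delicate point, and the step I expect to need the most care, is upgrading this $\ge$ to the strict inequality in the statement. The termwise bound is an equality exactly when $c_i = 1$, so the two sides coincide only in the single degenerate case where every part equals $1$ (equivalently $k = n$, the partition $n = 1 + 1 + \cdots + 1$). For any other partition at least one part satisfies $c_i \ge 2$, which contributes a strict inequality in that summand and thereby makes the total sum strictly exceed $an$. I would therefore record explicitly that the strict inequality holds for all partitions possessing at least one part larger than $1$; this is precisely the regime in which the lemma is invoked when bounding $\Omega(N)$ in Theorem~\ref{thm 1.10}, since there the relevant exponents arise from a perfect square and so each part is even, hence at least $2$. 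The all-ones partition, where both sides equal $an$, is the only case that must be excluded or handled separately.
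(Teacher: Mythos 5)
Your argument follows the same route as the paper's own proof: rewrite $an=\sum_{i=1}^{k}ac_i$ and compare termwise using Lemma~\ref{lem3.3}. The difference is that you are more careful at the boundary, and you have in fact caught a genuine slip. The paper asserts the strict inequality $ac_i<a^{c_i}$ for every $c_i\geq 1$, but Lemma~\ref{lem3.3} only yields $\psi(c_i)\leq\psi(1)=0$, with equality at $c_i=1$; hence the termwise bound degenerates to an equality whenever $c_i=1$, and for the all-ones partition ($k=n$, every $c_i=1$) both sides equal $an$. The lemma as stated is therefore false in exactly that one case, which your proof correctly isolates. One correction to your closing remark, however: in the application the parts $c_i$ are the valuations $v_{5}(\sigma(p_i^{2\gamma_i}))$, not the even exponents $2\gamma_i$, so they are not forced to be at least $2$; they sum to the odd number $2a-1$ and can all equal $1$. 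Indeed the paper's own Note after Lemma~\ref{o1} observes that the minimum $8a-4$ of $\mathcal{L}_{2a-1,5}$ comes from $\mathcal{A}_{2a-1,5}(2a-1)$, i.e.\ precisely from the all-ones partition, so the degenerate case is not excluded there. The correct conclusions are the non-strict inequality $an\leq\sum_{i=1}^{k}a^{c_i}$ here and, in Lemma~\ref{o1}, that $8a-4$ is the minimum of $\mathcal{L}_{2a-1,5}$ and is attained; this is all that Theorem~\ref{thm 1.10} requires, since its conclusion $\Omega(N)\geq 2\omega(N)+6a-4$ is itself non-strict.
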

\begin{proof}
By lemma \ref{lem3.3}, for each $c_{i} \geq 1$ and for each $a > e$, we have
\begin{align}\label{3.3}
ac_{i}<a^{c_i}.
\end{align}
Since $(c_{1}, c_{2}, ... , c_{k})
    $ is a partition of $n$, we have 
    \begin{align*}
    a\sum_{i=1}^{k}c_{i}=an.
    \end{align*}
    Now from (\ref{3.3}), we have
    \begin{align*}
    a\sum_{i=1}^{k}c_{i}=an < \sum_{i=1}^{k}a^{c_{i}}.
    \end{align*}
    This completes the prove.
\end{proof}
\begin{definition}
An odd number $M$ is said to be an odd $m/d$-perfect number if \begin{align*}
\frac{\sigma(M)}{M}=\frac{m}{d}.
\end{align*}
\end{definition}
  \begin{lemma}\label{pn}
  If M is an odd m/d-perfect number with k distinct prime factors then $M < d(d+1)^{(2^k-1)^2}$.
  \end{lemma}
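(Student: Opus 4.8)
The plan is to argue by induction on the number $k$ of distinct prime factors of $M$. Write $M=\prod_{i=1}^{k}p_i^{a_i}$ with $p_1<\cdots<p_k$ all odd, and $I(M)=\sigma(M)/M=m/d$ in lowest terms. A first observation is that $\gcd(m,d)=1$ together with $d\,\sigma(M)=mM$ forces $d\mid M$, so $M/d=\gcd(M,\sigma(M))$ is a genuine integer and the target bound is equivalent to $M/d<(d+1)^{(2^k-1)^2}$. The base case $k=1$ is immediate: here $M=p_1^{a_1}$, its abundancy $\sigma(p_1^{a_1})/p_1^{a_1}$ is already reduced because $\sigma(p_1^{a_1})\equiv 1\pmod{p_1}$, so $d=M$ and the claimed bound $M<d(d+1)$ is trivial.

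For the inductive step I would peel off the largest prime power. Set $M'=M/p_k^{a_k}=\prod_{i<k}p_i^{a_i}$, a number with $k-1$ distinct odd prime factors, and let $I(M')=m'/d'$ in lowest terms. The crucial quantitative input is a bound on $p_k^{a_k}$. Since $v_{p_k}(\sigma(M))=v_{p_k}(\sigma(M'))$, the valuation formula of Lemma \ref{lem1} gives $v_{p_k}(d)=a_k-\min\{a_k,\,v_{p_k}(\sigma(M'))\}$, whence in every case $a_k\le v_{p_k}(d)+v_{p_k}(\sigma(M'))$ and therefore
\begin{align*}
p_k^{a_k}\le p_k^{v_{p_k}(d)}\,p_k^{v_{p_k}(\sigma(M'))}\le d\,\sigma(M').
\end{align*}
Because $\sigma(M')=I(M')\,M'<I(M)\,M'=(m/d)M'$, this yields the quadratic recursion $M=p_k^{a_k}M'<m\,M'^{\,2}$. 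Squaring $M'$ is the engine of the whole argument: it is exactly what doubles the exponent as $k$ increases.

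Feeding in the inductive hypothesis $M'<d'(d'+1)^{(2^{k-1}-1)^2}$ and substituting into $M<mM'^{2}$ reduces the bookkeeping to the arithmetic identity $(2^k-1)^2=2(2^{k-1}-1)^2+(2^{2k-1}-1)$: the factor $M'^{2}$ accounts for the term $2(2^{k-1}-1)^2$, while the remaining slack $(d+1)^{2^{2k-1}-1}$ must swallow the stray factors $m$, $d'^{2}/d$, and the constants. The numerator is controlled by property (\ref{p5}), namely $m/d=I(M)<\prod_{i}p_i/(p_i-1)$, which for $k$ odd primes is bounded by a slowly growing quantity; hence $m$ is harmless against the rapidly growing slack as soon as $k\ge 2$.

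The main obstacle is controlling the reduced denominator $d'$ of $M'$ in terms of $d$. The inequality $I(M')<I(M)$ does \emph{not} by itself bound $d'$, so a careful analysis of how the primes redistribute between numerator and denominator under reduction is needed. Writing
\begin{align*}
\frac{d'}{d}=p_k^{\,\min\{a_k,\,v_{p_k}(\sigma(M'))\}-a_k}\prod_{i<k}p_i^{\,\delta_i},\qquad 0\le\delta_i\le v_{p_i}\!\left(\sigma(p_k^{a_k})\right),
\end{align*}
one sees that $d'/d$ stays within a bounded factor of $1$ precisely when $p_k\nmid\sigma(M')$, whereas the exceptional configuration $p_k\mid\sigma(M')$ (the largest prime dividing a smaller prime's divisor sum) must be treated separately. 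Verifying that in all cases $(d'+1)$ exceeds $(d+1)$ only by a factor the exponential slack can absorb is the delicate heart of the proof; once this is established, the induction closes and delivers $M<d(d+1)^{(2^k-1)^2}$.
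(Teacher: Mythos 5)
The paper itself offers no argument for this lemma---it simply cites Nielsen \cite{pn}---so the question is whether your sketch could be completed as an independent proof. The engine you set up is sound and classical: from $d\,\sigma(M)=mM$, $\gcd(m,d)=1$ and $v_{p_k}(\sigma(M))=v_{p_k}(\sigma(M'))$ one does get $a_k\le v_{p_k}(d)+v_{p_k}(\sigma(M'))$, hence $p_k^{a_k}\le d\,\sigma(M')<mM'$ and $M<mM'^{2}$; the exponent identity $(2^k-1)^2=2(2^{k-1}-1)^2+(2^{2k-1}-1)$ is correct, and the base case is fine.

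But the induction does not close, and the obstruction you flag at the end is not a delicate verification to be supplied later---it is fatal to this formulation. The inductive hypothesis bounds $M'$ only in terms of its own reduced denominator $d'=M'/\gcd(M',\sigma(M'))$, and $d'$ is not bounded by any function of $d$ and $k$. In the case $p_k\nmid\sigma(M')$ your bookkeeping does give $d'/d\le\sigma(p_k^{a_k})/p_k^{a_k}<2$ and the arithmetic closes with room to spare. In the complementary case $p_k\mid\sigma(M')$, however, $d$ sheds part or all of the factor $p_k^{a_k}$ while $d'$ retains whatever part of $M'$ fails to cancel into $\sigma(M')$; generically $d'$ is of the order of $M'$ itself (already for $M'=p^2$ one has $d'=M'$), and then $M'<d'(d'+1)^{(2^{k-1}-1)^2}$ is vacuous as an absolute bound and the step $M<mM'^2$ becomes circular. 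Concretely, in the application of this paper one always has $d=5$, yet the denominator of $I(N/p_k^{a_k})$ can contain entire prime powers $p_i^{a_i}$ and so cannot be controlled by $5$ and $k$. This is precisely why the published proofs (Heath-Brown's bound of the shape $(4d)^{4^k}$ and Nielsen's refinements, including the result quoted here) do not induct over the class of $m/d$-perfect numbers: they prove a strictly stronger statement about finite collections of prime powers satisfying a divisibility relation $\prod_i\sigma(p_i^{a_i})\mid D\prod_i p_i^{a_i}$, carrying the auxiliary integer $D$ through the induction so that the ``denominator'' of every sub-problem is controlled by construction; and obtaining the exponent $(2^k-1)^2$ rather than $4^k$ further requires Nielsen's branching argument over which prime's exponent is fixed next, not a single peel-off of the largest prime. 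To repair your proof you would need to strengthen the induction hypothesis along these lines; as written there is a genuine gap.
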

  \begin{proof}
  See \cite{pn} for a proof.
  \end{proof}
\section{Proofs of the Main Theorems}
\subsection{Proof of Theorem \ref{thm 1.3}.} 
Let $p$ and $q$ be two distinct primes with $p^{k-1} \mid \mid (q-1)$ where $k$ is some positive integer. Suppose that, $p$ divides $\sigma(q^{2a})$ where $a\in \mathbb{Z^+}$. Then

$$\sigma(q^{2a})\equiv 0 \pmod p$$
i.e.,
$$\sum_{r=0}^{2a}q^{r}\equiv 0 \pmod p$$
which is equivalent to,
$$\frac{q^{2a+1}-1}{q-1}\equiv 0 \pmod p.$$
Therefore,
$$q^{2a+1}\equiv 1 \pmod {p^{k}}.$$
By $Euler-Fermat$ theorem we can write
$$q^{\phi(p^{k})}\equiv\ 1 \pmod  {p^{k}}.$$
Let $f$ be the smallest positive integer such that $q^{f} \equiv 1 \pmod {p^{k}}$, then $f>1$ as if $f=1$ then $q \equiv 1 \pmod {p^{k}}$ i.e., $p^{k}\mid(q-1)$ which is impossible as $p^{k-1}\mid \mid (q-1)$. Since $2a+1\geq f$ we can write $2a+1=tf+r$ for some $t\in \mathbb{Z^+}$ and $0\leq r< f$. Then
$$1\equiv q^{2a+1}\equiv q^{tf+r}\equiv q^r \pmod {p^{k}}.$$
So $r=0$ and therefore we have $2a+1 \equiv 0 \pmod f$, by the same argument we can prove that $\phi(p^{k})\equiv\ 0 \pmod f$. Moreover $f$ must be an odd positive integer otherwise $2a+1 \not \equiv 0 \pmod f$.\\
Conversely, let $2a+1\equiv 0 \pmod  f$ where $f$ is the smallest odd positive integer greater than $1$ such that $q^{f}\equiv 1 \pmod {p^{k}}$. Then $2a+1=tf$, for some $t\in \mathbb{Z^{+}}$ and therefore 
$$q^{2a+1}=q^{tf} \equiv 1 \pmod {p^{k}}.$$
Since $(q-1,p^{k})=p^{k-1}$, we have

\begin{align*}
    \frac{q^{2a+1}-1}{(q-1)}\equiv 0 \pmod p
   \end{align*}
implies
\begin{align*}
\sigma(q^{2a})\equiv 0 \pmod p.
\end{align*}
In other words, $p$ divides $\sigma(q^{2a})$ and this proves the theorem.\qed\\
\begin{remark}\label{re 3.1}
  If $N=\prod_{i=1}^{m}p_i^{2a_i}$$(p_1=5)$ is a friend of 10 then $I(N)=\frac{9}{5}$ i.e., $\sigma(5^{2a_1})\cdot \prod_{i=2}^{m}\sigma(p_i^{2a_i})=9\cdot 5^{{2a_1}-1}\cdot \prod_{i=2}^{m}p_{i}^{2a_i}$. This shows that for every prime factors $p_i$ of $N$, there exists some $p_j$ other than $p_i$ such that $p_i\mid \sigma(p_j^{2a_j})$ and also for every $p_i$, the prime factors of $\sigma(p_i^{2a_i})$ are precisely belong to the set $\{3,5,p_{2},p_{3},...,p_{m}\}.$
\end{remark}
\subsection{Proof of Corollary \ref{coro 1.4}.}
Let $p$, $p^*$ and $q$ be three distinct primes and $p \mid \sigma(q^{2a})$ and $p^* \mid \sigma(q^{2a^*})$ with $f_{p}^{q}$, $f_{p^*}^{q}$ respectively in Theorem \ref{thm 1.3}. Also suppose that $f_{p^*}^{q}\mid f_{p}^{q}$. Then by Theorem \ref{thm 1.3} we have $2a+1\equiv 0 \pmod {f_{p}^{q}}$ and $2a^*+1\equiv 0 \pmod {f_{p^*}^{q}}$ and since $f_{p^*}^{q}\mid f_{p}^{q}$ we can write $2a+1\equiv 0 \pmod {f_{p^*}^{q}}$ and this implies that  $p^*\mid \sigma(q^{2a})$ by Theorem \ref{thm 1.3} and since $(p,p^*)=1$, we have $pp^*\mid \sigma(q^{2a})$. It completes the proof.\qed\\
\subsection{Proof of Corollary \ref{coro 1.5}.}
Let $N=\prod_{i=1}^{m}p_i^{2a_i}$$(p_1=5)$ with $5^{2a{_1}}\mid \mid N$ where $a{_1}\in \mathbb{Z^+}$, be a friend of $10$. In particular let $q=5$ in Theorem \ref{thm 1.3}. Then there must exists a prime $p$ of $N$ such that $p \mid \sigma(5^{2a_{1}})$. Since $p\geq5$, $p\nmid 4$, therefore we have $2a_{1}+1\equiv 0 \pmod f$ and $5^{f}\equiv 1 \pmod p$ where $f$ is the smallest odd positive integer greater than 1. As $f\mid (2a_{1}+1)$ and $f\mid \phi(p)$ i.e., $f\mid (p-1)$, we have $f\leq \min\{ 2a_{1}+1,p-1\}$. This completes the proof.\qed\\
\subsection{Proof of Corollary \ref{coro 1.6}.}
Let $N$ be a friend of $10$ and $F_{k}=2^{2^k}+1$ ($k\in \mathbb{Z}_{\geq0}$) be a prime factor of $N$.
If possible, assume that $N$ has no prime factor congruent to $1$ modulo $F_{k}$. Now since $F_{k}\mid N$, by Theorem \ref{thm 1.3} there exists a prime factor $p$ of $N$ with $p^{2a_p}\mid \mid N$ and $F_{k}^{k^*-1}\mid \mid (p-1)$ such that $2a_{p}+1\equiv 0 \pmod f$ and $p^{f}\equiv 1 \pmod {F_{k}^{k^*}}$ where $f$ is smallest positive odd integer greater than $1$. Since no prime factor $p$ of $N$ is congruent to $1$ modulo $F_{k}$, we have $k^*=1$. Further as $f\mid \phi(F_{k}^{k^*})$ i.e., $f \mid \phi(F_{k})$ and $\phi(F_{k})=2^{2^{k}}$, this shows that there does not exist any odd positive divisor of $\phi(F_{k})$ greater than $1$ as it has only one prime factor $2$. Therefore, by Theorem \ref{thm 1.3} we can conclude that $F_{k}$ does not divide $\sigma(p^{2a_p})$. Since $N$ is a friend of 10 and $p$ was taken arbitrary, using Remark \ref{re 3.1} we can conclude that $F_{k}$ cannot be a divisor of $N$ but this contradicts our assumption that $F_k$ is a prime factor of $N$. Hence there must exists a prime factor $p$ of $N$ such that $p\equiv 1 \pmod  {F_{k}}$. Since $(p-1)$ is even we have $p\equiv 1 \pmod  {2F_{k}}$.\qed\\
 \begin{remark}\label{re 3.2}
  If $N$ is a friendly number of $10$ then it has a prime factor $p$ such that $p\equiv 1 \pmod {10}$ as $5\mid N$ and $5$ is a Fermat prime. Also it is useful to note that as $17$ is a Fermat prime and if $17\mid N$ then $N$ must have a prime factor $p$, such that $p\equiv 1 \pmod {34}$.  
 \end{remark}

\subsection{Proof of Theorem \ref{thm 1.7}}
Let $p$ and $q$ be two distinct prime numbers ($q>p$) such that $p\mid \sigma(q^{2a})$ for some positive integer $a$. Then
\begin{align*}
    \sigma(q^{2a})\equiv 0 \pmod p
\end{align*}

i.e.,
\begin{center}
\begin{equation}\label{eq 1}
\sum_{j=0}^{2a}q^j\equiv 0 \pmod p.
\end{equation}
\end{center}
As $q>p$, we can write $q=pk+r$, for some $k\in \mathbb{Z^{+}}$ and $0\leq r\leq (p-1)$. If $r=0$ then $q=pk$ but $p \nmid \sigma((pk)^{2a})$, therefore $1\leq r\leq (p-1)$. From \eqref{eq 1} we have that
\begin{align*}
   \sum_{j=0}^{2a}(pk+r)^j\equiv 0 \pmod p 
\end{align*}

i.e.,
\begin{align*}
    \sum_{j=0}^{2a}r^j\equiv 0 \pmod p.
\end{align*}

If $r=1$ then
\begin{align*}
   2a+1 \equiv 0 \pmod p .
\end{align*}

And if $r\neq 1$ then
\begin{align*}
    \frac{r^{2a+1}-1}{r-1}\equiv 0 \pmod p.
\end{align*}

Since $(r-1,p)=1$ we have
$$r^{2a+1}\equiv 1 \pmod p .$$
By $Euler-Fermat$ theorem we can write
$$r^{p-1}\equiv 1 \pmod p .$$
Let $f$ be the smallest positive integer such that $r^f\equiv 1 \pmod p$. Then $f>1$ otherwise $r\equiv 1 \pmod p$ i.e., $r=1$. Since $2a+1\geq f$ we can write $2a+1=tf+s$ for some $t\in \mathbb{Z^{+}}$ and $0\leq s<f$. Then
\begin{align*}
  1\equiv r^{2a+1}\equiv r^{tf+s}\equiv r^s \pmod p . 
\end{align*}

So, $s=0$ and therefore we have $2a+1 \equiv 0 \pmod f$, by the same argument we can prove that $p-1 \equiv\ 0 \pmod f$. Moreover $f$ must be an odd positive integer otherwise $2a+1 \not \equiv 0 \pmod f$.\\
Conversely, let for $r\neq 1$, $q\equiv r \pmod p$ and $2a+1 \equiv 0 \pmod f$ where $f$ is the smallest odd positive integer greater than 1 such that $r^f\equiv 1 \pmod p$ and for $r=1$, $q\equiv r \pmod p$ and $2a+1\equiv 0 \pmod p$. \\
For $r\neq 1$, since $r^f\equiv 1 \pmod p$, we have
$q^f\equiv r^f\equiv 1 \pmod p$. If possible there exists $f'<f$ such that $q^{f'}\equiv 1 \pmod p$ then $1\equiv q^{f'}\equiv r^{f'} \pmod p$. This shows that $f'>f$ by our assumption on $f$. But this leads to a contradiction to that fact that $f'<f$, therefore $f$ is the smallest odd positive integer such that $q^f\equiv 1 \pmod p$ with $2a+1\equiv 0 \pmod f$. Since $p \nmid (q-1)$, by Theorem \ref{thm 1.3} we can conclude that $p\mid \sigma(q^{2a})$.\\
For $r=1$, we have $q\equiv 1 \pmod p$ and $2a+1\equiv 0 \pmod p$. For any positive integer $j$ we have

\begin{align*}
    q^{j}\equiv 1 \pmod p .
\end{align*}
Therefore,
\begin{align*}
   1+\sum_{j=1}^{2a}q^{j}\equiv 1+\sum_{j=1}^{2a}1 \pmod p 
\end{align*}
i.e.,
\begin{align*}
    \sum_{j=0}^{2a}q^{j}\equiv 2a+1 \pmod p
\end{align*}
which is equivalent to,
\begin{align*}
   \sigma(q^{2a})\equiv 0 \pmod p .
\end{align*}
In other words, $p$ divides $\sigma(q^{2a})$ and this completes the proof.\qed\\

\subsection{Proof of Corollary \ref{coro 1.8}.} Let $N$ be a friend of $10$ then we have $9\mid \sigma(N)$. This shows that, there exists a prime factor $p$ of $N$ with $p^{2a_p}\mid\mid N$ such that $3\mid \sigma(p^{2a_p})$. Since all prime factors of $N$ are greater than $3$, we can use Theorem \ref{thm 1.7}. Now we shall prove that $p$ can not be congruent to $2$ modulo $3$. Suppose that, $p\equiv 2 \pmod 3$ then we must have an odd positive integer $f>1$ such that $2^f\equiv 1 \pmod 3$ and $2a_p+1 \equiv 0 \pmod f$. But no such $f$ exists as for any odd positive integer $f$ we have $2^f\equiv 1 \pmod 3$ i.e., $(-1)^f\equiv 1 \pmod 3$ i.e., $-1\equiv 1 \pmod 3$, which is absurd. Therefore $p$ must be congruent to $1$ modulo $3$. Since $p-1$ is even we have $p\equiv 1 \pmod 6$.\qed\\\\

We use Corollary \ref{coro 1.4} to prove Theorem \ref{thm 1.2} in all cases. For example we write "if $11 \mid \sigma(5^{2a})$ then $71 \mid \sigma(5^{2a})$" as $f_{11}^5=5=f_{71}^5$ and $f_{11}^5 \mid f_{71}^5$ in that case we have $2a+1\equiv 0 \pmod 5$, again "if $19 \mid \sigma(5^{2a})$ then  $31 \mid \sigma(5^{2a})$" as $f_{19}^5=9$, $f_{31}^5=3$ and $f_{31}^{5} \mid f_{19}^{5}$ in that case we have $2a+1\equiv 0 \pmod 9$. The values of $f_{p}^{q}$ which we shall use in proving Theorem \ref{thm 1.2}, can be found after the conclusion part. 

\subsection{Proof of Theorem \ref{thm 1.2}.}
Finally, we are in a position to prove our main result. Here is the proof:

 Suppose that $N$ has exactly six distinct odd prime factors $p_1=5$, $p_2$, $p_3$, $p_4$, $p_5$ and $p_6$ with $p_6>p_5>p_4>p_3>p_2>p_1=5$. Then we can write $N=5^{2a_1}\cdot p_2^{2a_2}\cdot p_3^{2a_3}\cdot p_4^{2a_4}\cdot p_5^{2a_5}\cdot p_6^{2a_6}$ where $a_1$, $a_2$, $a_3$, $a_4$, $a_5$, $a_6$ are positive integers. Then the possible values of $p_i$, $2\leq i\leq 6$ are the following:

 \label{algebra}
  
  \begin{figure}[h]
    \centering
    \includegraphics[width=0.9\textwidth]{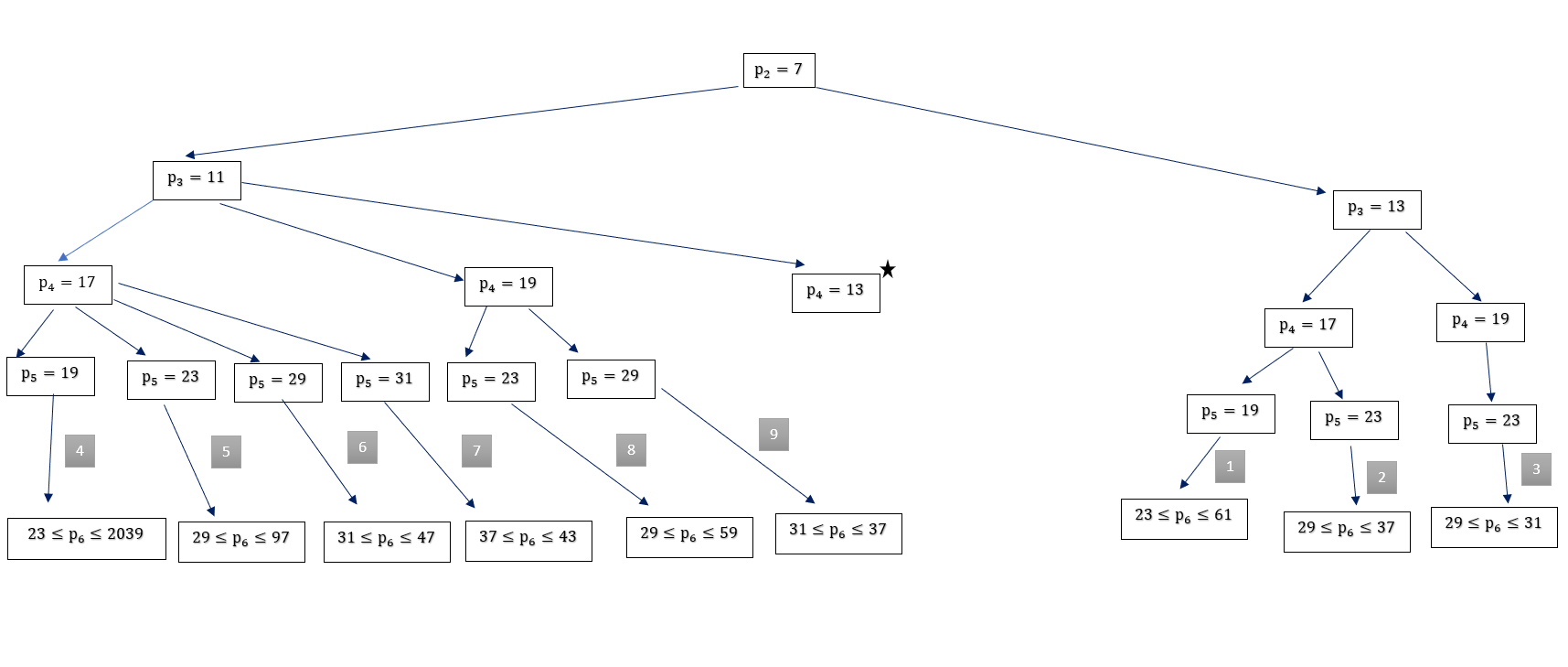}
    \caption{}
    \label{fig:enter-la}
\end{figure}
\label{algebra}
  \label{algebra}
  \begin{figure}[h]
    \centering
    \includegraphics[width=0.9\textwidth]{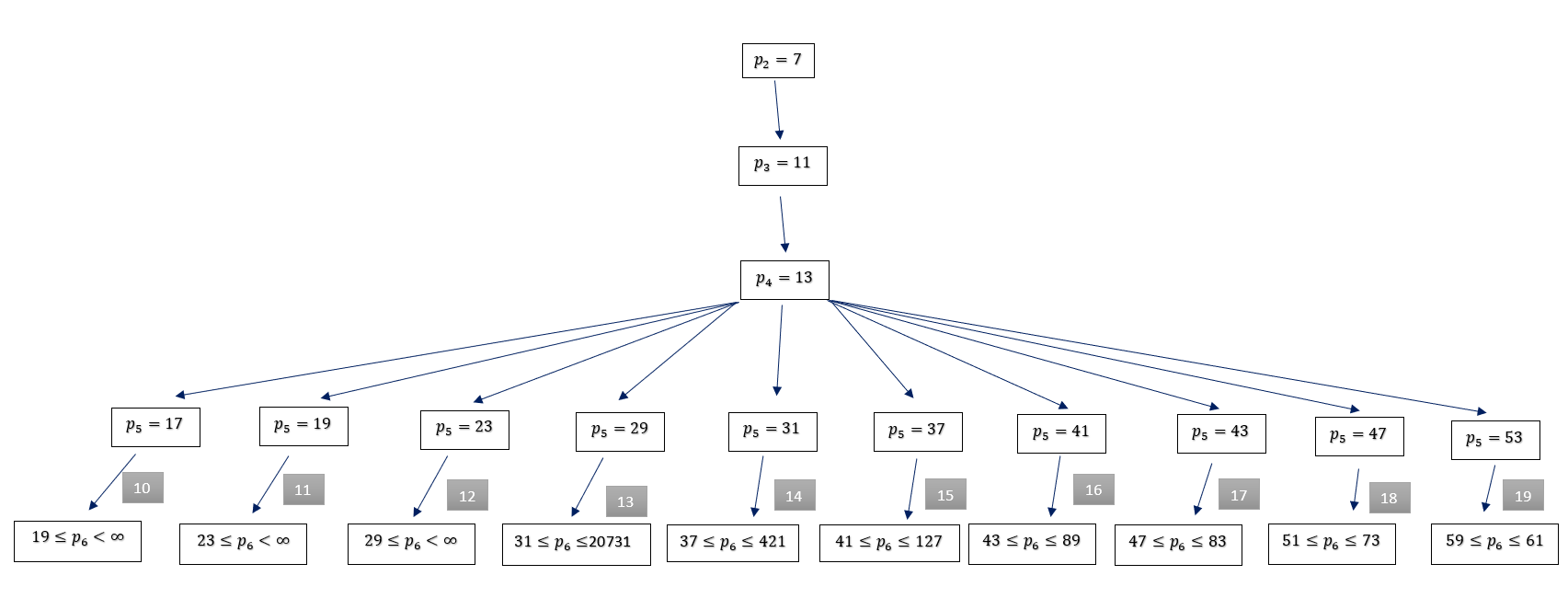}
    \caption{}
    \label{fig:enter-la}
\end{figure}
\label{algebra}
There are $19$ possible chain of values of $p_i$, $2\leq i\leq 6$ so that $I(N)=I(5^{2a_1}\cdot p_2^{2a_2}\cdot p_3^{2a_3}\cdot p_4^{2a_4}\cdot p_5^{2a_5}\cdot p_6^{2a_6})<\frac{9}{5}$ does not occur for all $2a_i$ i.e., for other combination of $p_i$ we always have $I(N)<\frac{9}{5}$. We shall begin with chain(1) and We shall show that the abundancy index of these possible chain of values of  $p_i$ even fails to give $\frac{9}{5}$.\\\\
\textbf{Case-1:}\\
For chain(1), $N=5^{2a_1}\cdot7^{2a_2}\cdot13^{2a_3}\cdot17^{2a_4}\cdot19^{2a_5}\cdot p_6^{2a_6}$, where $23\leq p_6\leq 61$ and $a_1$, $a_2$, $a_3$, $a_4$, $a_5$, $a_6$ are positive integers. But by Remark \ref{re 3.2} there must exists a prime factor of $N$ congruent to 1 modulo 34 as $17\mid N$ but, for any values of $p_6$, $N$ has no such prime factor. Therefore, $N$ cannot be a friend of $10$.\\\\
\textbf{Case-2:}\\
For chain(2), $N=5^{2a_1}\cdot7^{2a_2}\cdot13^{2a_3}\cdot17^{2a_4}\cdot23^{2a_5}\cdot p_6^{2a_6}$, where $29\leq p_6\leq 37$ and $a_1,a_2,a_3,a_4,a_5,a_6$ are positive integers. Using the same argument given in \textbf{Case-1}, we can conclude that $N$ cannot be a friend of $10$.\\\\
\textbf{Case-3:}\\
For chain(3), $N=5^{2a_1}\cdot7^{2a_2}\cdot13^{2a_3}\cdot19^{2a_4}\cdot23^{2a_5}\cdot p_6^{2a_6}$, where $29\leq p_6\leq 31$ and $a_1$, $a_2$, $a_3$, $a_4$, $a_5$, $a_6$ are positive integers. But $p_6=29$ is impossible as by Remark \ref{re 3.2}, there must exists a prime factor of $N$ congruent to $1$ modulo $10$. Therefore $p_6=31$ . Since $13\nmid (p_i-1)$ and $\phi(13)=12$, $f_{13}^{p_i}$ is $3$, for all $1\leq i\leq 6$. But $p_{i}^3\not\equiv 1 \pmod {13}$, for each $p_i$, $1\leq i\leq 6$. Therefore, by Theorem \ref{thm 1.3} we can see that, $13$ does not divide any $\sigma(p_i^{2a_i})$ and which is absurd by Remark \ref{re 3.1}. Therefore, $N$ cannot be a friend of $10$.\\\\
\textbf{Case-4:}\\
For chain(4), 
$N=5^{2a_1}\cdot7^{2a_2}\cdot11^{2a_3}\cdot17^{2a_4}\cdot19^{2a_5}\cdot p_6^{2a_6}$ where $23\leq p_6\leq 2039$ and $a_1$, $a_2$, $a_3$, $a_4$, $a_5$, $a_6$ are positive integers. Since $17\mid N$  by Remark \ref{re 3.2} there must exists a prime factor of $N$ congruent to $1$ modulo $34$, therefore $p_6$ must be congruent to $1$ modulo $34$. Now $5$ can divides $\sigma(11^{2a_3})$ or $\sigma(p_6^{2a_6})$. But if , $5\mid \sigma(11^{2a_3})$ then $3221\mid \sigma(11^{2a_3})$ that is $p_6$ must be $3221$. But $p_6=3221$ is not congruent $1$ modulo $34$. Hence 5 must divides $\sigma(p_6^{2a_6})$.  Since $9\mid \sigma(N)$, $3$ must divides one of $\sigma(p_i^{2a_i})$, for $1\leq i \leq 6$. But $3\nmid \sigma(p_i^{2a_i})$, for $i=1,3,4$. If $3\mid \sigma(19^{2a_5})$ then $127\mid \sigma(19^{2a_5})$ that is $p_6$ must be $127$ but again it is not congruent $1$ mod $34$. Now it is clear that $3\mid \sigma(7^{2a_2})$, we shall show that $9$ cannot divides $\sigma(7^{2a_2})$. If $9\mid \sigma(7^{2a_2})$ then this immediately implies  $37\mid \sigma(7^{2a_2})$ that is $p_6$ must be $37$ but it is not congruent $1$ modulo $34$. Therefore, $\sigma(p_6^{2a_6})$ must be divisible by $3$ as $9~|~\sigma(n)$. Hence $p_6$ must be congruent to $1$ modulo $34$ and $\sigma(p_6^{2a_6})$ must be divisible by $3$ and $5$. Using Theorem \ref{thm 1.7} we get, $p_6$ is congruent to $1$ modulo $510$. Therefore $p_6$ either $1021$ or $1531$. If $p_6=1021$ then $5\mid \sigma(1021^{2a_6})$ but this immediately allows $41\mid \sigma(1021^{2a_6})$ which is impossible as $p_i\neq 41$, for all $1\leq i\leq 6$. Hence the only choice for $p_6$ is $1531$ but again $5\mid \sigma(1531^{2a_6})$ implies $691\mid \sigma(1531^{2a_6})$ which is impossible by the same argument. Therefore there is no such $p_6$ for which $5$ can divides $\sigma(p_6^{2a_6})$. Hence $N$ cannot be a friend of $10$.\\\\
\textbf{Case-5:}\\
For chain(5), $N=5^{2a_1}\cdot7^{2a_2}\cdot11^{2a_3}\cdot17^{2a_4}\cdot23^{2a_5}\cdot p_6^{2a_6}$ where $29\leq p_6\leq 97$ and $a_1$, $a_2$, $a_3$, $a_4$, $a_5$, $a_6$ are positive integers. Using the same argument given in \textbf{Case-1}, we can conclude that $N$ cannot be a friend of $10$.\\\\
\textbf{Case-6:}\\
For chain(6), $N=5^{2a_1}\cdot7^{2a_2}\cdot11^{2a_3}\cdot17^{2a_4}\cdot29^{2a_5}\cdot p_6^{2a_6}$ where $31\leq p_6\leq 47$ and $a_1$, $a_2$, $a_3$, $a_4$, $a_5$, $a_6$ are positive integers. Using the same argument given in \textbf{Case-1}, we can conclude that $N$ cannot be a friend of $10$.\\\\
\textbf{Case-7:}\\
For chain(7), $N=5^{2a_1}\cdot7^{2a_2}\cdot11^{2a_3}\cdot17^{2a_4}\cdot31^{2a_5}\cdot p_6^{2a_6}$ where $37\leq p_6\leq 43$ and $a_1$, $a_2$, $a_3$, $a_4$, $a_5$, $a_6$ are positive integers. Using the same argument given in \textbf{Case-1}, we can conclude that $N$ cannot be a friend of $10$.\\\\
\textbf{Case-8:}\\
For chain(8), $N=5^{2a_1}\cdot7^{2a_2}\cdot11^{2a_3}\cdot19^{2a_4}\cdot23^{2a_5}\cdot p_6^{2a_6}$ where $29\leq p_6\leq 59$ and $a_1$, $a_2$, $a_3$, $a_4$, $a_5$, $a_6$ are positive integers. Now there are only three prime factors of $N$, $p_3=11$, $p_4=19$, $p_6=31$ or $p_6=59$ that can divide $\sigma(5^{2a_1})$. Now if $p_3=11\mid \sigma(5^{2a_1})$, immediately $71\mid \sigma(5^{2a_1})$ again if $p_4=19\mid \sigma(5^{2a_1})$, immediately $829\mid \sigma(5^{2a_1})$. Therefore $p_3=11$, $p_4=19$ cannot divide $\sigma(5^{2a_1})$ as for both the cases $p_6$ must be $71$ and $829$ respectively but $p_6\leq 59$. Therefore, $p_6$ must divides $\sigma(5^{2a_1})$ and $p_6$ must be either $31$ or $59$ as for other values of $p_6$, no $p_i$ will divides $\sigma(5^{2a_1})$ . If $p_6=31$ then $31\mid \sigma(5^{2a_1})$ which allows $829\mid \sigma(5^{2a_1})$ but it is impossible as $p_i\neq 829$, for all $1\leq i\leq 6$. Therefore only possible value of $p_6$ is $59$. But $35671\mid \sigma(5^{2a_1})$ whenever $59\mid \sigma(5^{2a_1})$, which is absurd as $p_i\neq 35671$, for all $1\leq i\leq 6$. Therefore, no prime factor of $N$ can divides $\sigma(5^{2a_1})$ that is $\sigma(5^{2a_1})$ has prime factors other than $3$, $p_1=5$, $p_2$,...,$p_5$, $p_6$. This proves that $N$ cannot be a friend of $10$.\\\\
\textbf{Case-9:}\\
For chain(9), $N=5^{2a_1}\cdot7^{2a_2}\cdot11^{2a_3}\cdot19^{2a_4}\cdot29^{2a_5}\cdot p_6^{2a_6}$ where $31\leq p_6\leq 37$ and $a_1$, $a_2$, $a_3$, $a_4$, $a_5$, $a_6$ are positive integers. Now there are only three prime factors of $N$, $p_3=11$, $p_4=19$, $p_6=31$ that can divide $\sigma(5^{2a_1})$. By the same argument given in \textbf{Case-8} we can conclude that  $N$ cannot be a friend of $10$.\\\\
\textbf{Case-10:}\\
For chain(10), $N=5^{2a_1}\cdot7^{2a_2}\cdot11^{2a_3}\cdot13^{2a_4}\cdot17^{2a_5}\cdot p_6^{2a_6}$ where $19\leq p_6<\infty$ and $a_1$, $a_2$, $a_3$, $a_4$, $a_5$, $a_6$ are positive integers. Then we have, $I(N)>I(5^{2a_1}\cdot7^{2a_2}\cdot11^{2a_3}\cdot13^{2a_4}\cdot17^{2a_5})\geq I(5^2\cdot7^2\cdot11^2\cdot13^2\cdot17^2)=\frac{31}{25}\cdot\frac{57}{49}\cdot\frac{133}{121}\cdot\frac{183}{169}\cdot\frac{307}{289}>\frac{9}{5}$. Therefore, $N$ cannot be a friend of $10$.\\\\
\textbf{Case-11:}\\
For chain(11),$N=5^{2a_1}\cdot7^{2a_2}\cdot11^{2a_3}\cdot13^{2a_4}\cdot19^{2a_5}\cdot p_6^{2a_6}$ where $23\leq p_6<\infty$ and $a_1$, $a_2$, $a_3$, $a_4$, $a_5$, $a_6$ are positive integers. Then we have
$I(N)>I(5^{2a_1}\cdot7^{2a_2}\cdot11^{2a_3}\cdot13^{2a_4}\cdot19^{2a_5})\geq I(5^2\cdot7^2\cdot11^2\cdot13^2\cdot19^2)=\frac{31}{25}\cdot\frac{57}{49}\cdot\frac{133}{121}\cdot\frac{183}{169}\cdot\frac{381}{361}>\frac{9}{5}$. Therefore, $N$ cannot be a friend of $10$.\\\\
\textbf{Case-12:}\\
For chain(12), $N=5^{2a_1}\cdot7^{2a_2}\cdot11^{2a_3}\cdot13^{2a_4}\cdot23^{2a_5}\cdot p_6^{2a_6}$ where $29\leq p_6<\infty$ and $a_1$, $a_2$, $a_3$, $a_4$, $a_5$, $a_6$ are positive integers. If $2a_1>2$ then , $I(N)>I(5^{2a_1}\cdot7^{2a_2}\cdot11^{2a_3}\cdot13^{2a_4}\cdot23^{2a_5})\geq I(5^4\cdot7^2\cdot11^2\cdot13^2\cdot23^2)=\frac{781}{625}\cdot\frac{57}{49}\cdot\frac{133}{121}\cdot\frac{183}{169}\cdot\frac{381}{361}>\frac{9}{5}$. Therefore, $2a_1\leq 2$ that is $2a_1=2$. Since $\sigma(5^{2a_1})=\sigma(5^2)=31$, $p_6$ must be $31$ but then $I(N)=I(5^{2a_1}\cdot7^{2a_2}\cdot11^{2a_3}\cdot13^{2a_4}\cdot23^{2a_5}\cdot31^{2a_6})\geq I(5^2\cdot7^2\cdot11^2\cdot13^2\cdot23^2\cdot31^2)=\frac{31}{25}\cdot\frac{57}{49}\cdot\frac{133}{121}\cdot\frac{183}{169}\cdot\frac{381}{361}\cdot\frac{993}{961}>\frac{9}{5}$. Therefore, $N$ cannot be a friend of $10$.\\\\
We shall discuss \textbf{Case-13} at the very end, as it is a little bit more tricky than others. Before that, we shall discuss the remaining cases. \\\\
\textbf{Case-14:}\\
For chain(14), $N=5^{2a_1}\cdot7^{2a_2}\cdot11^{2a_3}\cdot13^{2a_4}\cdot31^{2a_5}\cdot p_6^{2a_6}$ where $37\leq p_6\leq 421$ and $a_1$, $a_2$, $a_3$, $a_4$, $a_5$, $a_6$ are positive integers. Now $13\mid \sigma(N)$ if $p_6\in$\{53, 61, 79, 107, 113, 131, 139, 157, 191, 211, 263, 269, 313, 347, 367, 373, 419\} that is for other values of $p_6$, $13\nmid \sigma(N)$. If $p_6\in$ \{53,61,107,113,157,263,313,347,367,373\}, then $p_3=11$ and $p_5=31$ only can divide $\sigma(5^{2a_1})$. But if $11\mid \sigma(5^{2a_1})$ then $71\mid \sigma(5^{2a_1})$ and if $31\mid \sigma(5^{2a_1})$ then $829\mid \sigma(5^{2a_1})$. Therefore $p_3=11$, $p_5=31$ cannot divide $\sigma(5^{2a_1})$ as for both the cases $p_6$ must be $71$ and $829$ respectively but $p_6$ neither be $71$ nor be $829$. Therefore, $p_6$ must be one of $79,131,139,191,211,269,419$. If $p_6$ is one of them but does not divide $\sigma(5^{2a_1})$ then one of $11$, $31$ must divides $\sigma(5^{2a_1})$ but as previously discussed, it is not possible. Therefore, $p_6$ must divides $\sigma(5^{2a_1})$, but if $p_6=79\mid \sigma(5^{2a_1})$ then $305175781\mid \sigma(5^{2a_1})$, if $p_6=131$ or $211\mid \sigma(5^{2a_1})$ then $71\mid \sigma(5^{2a_1})$, if $p_6=139\mid \sigma(5^{2a_1})$ then $8971\mid \sigma(5^{2a_1})$, if $p_6=191\mid \sigma(5^{2a_1})$ then $6271\mid \sigma(5^{2a_1})$, if $p_6=269\mid \sigma(5^{2a_1})$ then $1609\mid \sigma(5^{2a_1})$, if $p_6=419\mid \sigma(5^{2a_1})$ then $191\mid \sigma(5^{2a_1})$. In each case, we are getting prime factors of $\sigma(5^{2a_1})$ other than $3$, $p_1=5$, $p_2$,..., $p_6$ and this is absurd by Remark \ref{re 3.1}. Therefore, no prime factor of $N$ can divides $\sigma(5^{2a_1})$ that is $\sigma(5^{2a_1})$ has prime factors other than $3$, $p_1=5$, $p_2$,..., $p_5,p_6$, which is absurd. This proves that $N$ cannot be a friend of $10$.\\\\
\textbf{Case-15:}\\
For chain(15), $N=5^{2a_1}\cdot7^{2a_2}\cdot11^{2a_3}\cdot13^{2a_4}\cdot37^{2a_5}\cdot p_6^{2a_6}$ where $41\leq p_6\leq 127$ and $a_1$, $a_2$, $a_3$, $a_4$, $a_5$, $a_6$ are positive integers. Now $13\mid \sigma(N)$ if $p_6\in$ \{53,61,79,107,113\} that is for other values of $p_6$, $13\nmid \sigma(N)$. If $p_6\in$ \{53,61,107,113\} we can use same argument given in \textbf{Case-8} to show that, $N$ cannot be a friend of $10$. Therefore we assume $p_6$ is $79$. Now if $p_6$ does not divide $\sigma(5^{2a_1})$ then $11\mid \sigma(5^{2a_1})$ which allows $71\mid \sigma(5^{2a_1})$ but this is absurd by the fact that $p_i\neq 71$, for all $1\leq i\leq 6$. Therefore, $p_6=79$ must divides $\sigma(5^{2a_1})$. But $31\mid \sigma(5^{2a_1})$ whenever $79\mid \sigma(5^{2a_1})$ and this shows that $p_6=79$ cannot divides $\sigma(5^{2a_1})$ as $p_i\neq 31$, for all $1\leq i\leq 6$. Therefore, no prime factor of $N$ can divides $\sigma(5^{2a_1})$ that is $\sigma(5^{2a_1})$ has prime factors other than $3$, $p_1=5$, $p_2$,...,$p_5$, $p_6$. This proves that $N$ cannot be a friend of $10$.\\\\
\textbf{Case-16:}\\
For chain(16), $N=5^{2a_1}\cdot7^{2a_2}\cdot11^{2a_3}\cdot13^{2a_4}\cdot41^{2a_5}\cdot p_6^{2a_6}$ where $43\leq p_6\leq 89$ and $a_1$, $a_2$, $a_3$, $a_4$, $a_5$, $a_6$ are positive integers. Now $13\mid \sigma(N)$ if $p_6\in$ \{53,61,79\} that is for other values of $p_6$, $13\nmid \sigma(N)$. If $p_6\in$ \{53,61\} we can use same argument given in \textbf{Case-8} to show that, $N$ cannot be a friend of $10$. Therefore, assume that $p_6=79$. In this case also we can use similar argument given in \textbf{Case-15} to show that, $N$ cannot be a friend of $10$.\\\\
\textbf{Case-17:}\\
For chain(17), $N=5^{2a_1}\cdot7^{2a_2}\cdot11^{2a_3}\cdot13^{2a_4}\cdot43^{2a_5}\cdot p_6^{2a_6}$ where $47\leq p_6\leq 83$ and $a_1$, $a_2$, $a_3$, $a_4$, $a_5$, $a_6$ are positive integers. Now $13\mid \sigma(n)$ if $p_6\in$ \{53,61,79\} that is for other values of $p_6$, $13\nmid \sigma(N)$. If $p_6\in$ \{53,61\} we can use same argument given in \textbf{Case-8} to show that, $N$ cannot be a friend of $10$. Therefore, assume that $p_6=79$. In this case also we can use similar argument given in \textbf{Case-15} to show that, $N$ cannot be a friend of $10$.\\\\
\textbf{Case-18:}\\
For chain(18), $N=5^{2a_1}\cdot7^{2a_2}\cdot11^{2a_3}\cdot13^{2a_4}\cdot47^{2a_5}\cdot p_6^{2a_6}$ where $51\leq p_6\leq 73$ and $a_1$, $a_2$, $a_3$, $a_4$, $a_5$, $a_6$ are positive integers. Now $13\mid \sigma(N)$ if $p_6=53$ or $p_6=61$ that is for other values of $p_6$, $13\nmid \sigma(N)$. For any $p_6$, only $11\mid \sigma(5^{2a_1})$ but it allows $71\mid \sigma(5^{2a_1})$ but this is absurd as, $p_i\neq 71$, for all $1\leq i\leq 6$. Therefore, $N$ cannot be a friend of $10$.\\\\
\textbf{Case-19:}\\
For chain(19), $N=5^{2a_1}\cdot7^{2a_2}\cdot11^{2a_3}\cdot13^{2a_4}\cdot53^{2a_5}\cdot p_6^{2a_6}$ where $59\leq p_6\leq 61$ and $a_1$, $a_2$, $a_3$, $a_4$, $a_5$, $a_6$ are positive integers. If $p_6$ is other than 59 then only $p_3=11$ can divides $\sigma(5^{2a_1})$, but if it does then $71\mid \sigma(5^{2a_1})$, which is absurd. Therefore assume that $p_6=59$. Now $p_3=11$, $p_6=59$ can divide $\sigma(5^{2a_1})$ but using similar argument given in \textbf{Case-8} we conclude that $N$ cannot be a friend of $10$.\\\\
\textbf{Case-13:}\\
For chain(13),  $N=5^{2a_1}\cdot7^{2a_2}\cdot11^{2a_3}\cdot13^{2a_4}\cdot29^{2a_5}\cdot p_6^{2a_6}$ where $31 \leq p_6 \leq 20731$  and $a_1$, $a_2$, $a_3$, $a_4$, $a_5$, $a_6$ are positive integers. It is clear that $3$ must divides one of $\sigma(p_i^{2a_i})$ for $1 \leq i \leq 6$ as $9\mid \sigma(n)$. But $3\nmid \sigma(p_i^{2a_i})$ for $i=1,3,5$. Therefore $3$ can divides $\sigma(7^{2a_2})$ or $\sigma(13^{2a_4})$or $\sigma(p_6^{2a_6})$. But if $3\mid \sigma(7^{2a_2})$ then $19\mid \sigma(7^{2a_2})$ but this is impossible as $p_6 \geq 31$ and $p_i\neq 19$, for $1\leq i\leq 5$. Now if $3\mid \sigma(13^{2a_4})$ then $61\mid \sigma(13^{2a_4})$ that is $p_6$ must be $61$. Then $11$ is the only prime factor of $N$ which can divides $\sigma(5^{2a_1})$, but $11\mid \sigma(5^{2a_1})$ implies $71\mid \sigma(5^{2a_1})$ which is impossible. Hence $3$ must divides $\sigma(p_6^{2a_6})$. Again $5$ must divides one of $\sigma(p_i^{2a_i})$ for $1 \leq i \leq 6$. But $5\nmid \sigma(p_i^{2a_i})$ for $i=1,2,4,5$. Therefore $5$ can divides $\sigma(11^{2a_3})$ and $\sigma(p_6^{2a_6})$. If $5\mid \sigma(11^{2a_3})$ then $3221\mid \sigma(11^{2a_3})$ which implies that $p_6=3221$ but this is absurd as $3\nmid \sigma(3221^{2a_6})$. Hence $5$ must divides $\sigma(p_6^{2a_6})$. Again $13$ must divides one of $\sigma(p_i^{2a_i})$ for $1 \leq i \leq 6$, but $13\nmid \sigma(p_i^{2a_i})$ for $i=1,2,3,4$. Therefore $13$ can divides $\sigma(29^{2a_4})$ and $\sigma(p_6^{2a_6})$. If $13\mid \sigma(29^{2a_5})$ then $67\mid \sigma(29^{2a_5})$ that is $p_6$ must be $67$ but this is impossible as $5\nmid \sigma(67^{2a_6})$. Therefore we have $3$,$5$,$13$ divide $\sigma(p_6^{2a_6})$. Since $3\mid \sigma(p_6^{2a_6})$ and $5\mid \sigma(p_6^{2a_6})$ we have $p_6\equiv 1 \pmod {30}$. Also we have $13\mid \sigma(p_6^{2a_6})$, and $p_6>13$ therefore by Theorem \ref{thm 1.7} we have $p_6\equiv 1 \pmod{13}$ or $p_6\equiv 3 \pmod{13}$ or $p_6\equiv 9 \pmod{13}$ after solving all conditions on $p_6$ we obtain, $p_6 \equiv 1 \pmod{390}$ or $p_6 \equiv 61 \pmod{390}$ or $p_6 \equiv 211 \pmod{390}$. If $p_6 \equiv 1 \pmod{390}$ and since $31 \leq p_6 \leq 20731$, $p_6 \in$ \{1171, 1951, 2341, 2731, 3121, 3511, 5851, 7411, 8191, 8581, 8971, 10141, 10531, 11311, 11701, 14431, 14821, 15601, 15991, 16381, 17551, 19501, 19891\}. TABLE 1 contains all such $p_6$ along with $p$ and $p^*$ such that whenever $p\mid \sigma(p_6^{2a_6})$ then $p^*\mid \sigma(p_6^{2a_6})$ also. If $p_6 \equiv 61 \pmod{390}$ and since $31 \leq p_6 \leq 20731$, $p_6 \in$ \{ 61, 1231, 1621, 2011, 2791, 3181, 3571, 5521, 6301, 6691, 8641, 9421, 9811, 12541, 13711, 15271, 15661, 16831, 20341, 20731 \}. TABLE 2 contains all such $p_6$ along with $p$ and $p^*$ such that whenever $p\mid \sigma(p_6^{2a_6})$ then $p^*\mid \sigma(p_6^{2a_6})$ also. If $p_6 \equiv 211 \pmod{390}$ and since $31 \leq p_6 \leq 20731$, $p_6 \in$ \{ 211, 601, 991, 1381, 2161, 2551, 3331, 4111, 5281, 6451, 6841, 7621, 8011, 9181, 11131, 12301, 14251, 15031, 16981, 17761, 18541, 20101 \}.TABLE 3 contains all such $p_6$ along with $p$ and $p^*$ such that whenever $p\mid \sigma(p_6^{2a_6})$ then $p^*\mid \sigma(p_6^{2a_6})$ also.
\clearpage
\begin{table}[h]
\begin{minipage}{.38\textwidth}
 
\begin{tabular}{ |c|c|c|  }

\hline
$p_6$ &$p$& $p^*$ \\
\hline
$1171$ &$3$&$65353$ \\
$1951$ &$3$&$79$ \\
$2341$ &$3$&$37$ \\
$2731$ &$3$&$61$ \\
$3121$ &$5$&$521$ \\
$3511$ &$5$&$61$ \\
$5851$ &$5$&$181$ \\
$7411$ &$5$&$31$ \\
$8191$ &$3$&$22366891$ \\
$8581$ &$3$&$31$ \\
$8971$ &$3$&$271$ \\
$10141$ &$3$&$43$ \\
$10531$ &$3$&$157$ \\
$11311$ &$3$&$37$ \\
$11701$ &$3$&$97$ \\
$14431$ &$3$&$157$ \\
$14821$ &$5$&$271$ \\
$15601$ &$5$&$31$ \\
$15991$ &$5$&$61$ \\
$16381$ &$3$&$1237$ \\
$17551$ &$3$&$31$ \\
$19501$ &$3$&$163$ \\
$19891$ &$3$&$331$ \\
\hline
\end{tabular}
   \caption{}
\label{tab:mytable}

\end{minipage}%
\begin{minipage}{.38\textwidth}
\begin{tabular}{ |c|c|c|  }

\hline
$p_6$ &$p$& $p^*$ \\
\hline
$61$ &$3$&$97$ \\
$1231$ &$3$&$37$ \\
$1621$ &$3$&$9631$ \\
$2011$ &$3$&$14821$ \\
$2791$ &$3$&$199807$ \\
$3181$ &$5$&$61$ \\
$3571$ &$5$&$101$ \\
$5521$ &$5$&$71$ \\
$6301$ &$5$&$31$ \\
$6691$ &$3$&$79$ \\
$8641$ &$5$&$3044081$ \\
$9421$ &$3$&$631$ \\
$12541$ &$5$&$151$ \\
$13711$ &$3$&$61$ \\
$15271$ &$3$&$43$ \\
$15661$ &$3$&$37$ \\
$16831$ &$3$&$109$ \\
$20341$ &$3$&$31$ \\
$20731$ &$5$&$251$ \\
\hline
\end{tabular}
   \caption{}
\label{tab:mytable}

\end{minipage}%
\begin{minipage}{.38\textwidth}
 
\begin{tabular}{ |c|c|c|  }

\hline
$p_6$ &$p$& $p^*$ \\
\hline
$211$ &$3$&$37$ \\
$601$ &$3$&$9277$ \\
$991$ &$3$&$277$ \\
$1381$ &$5$&$811$ \\
$2161$ &$3$&$119797$ \\
$2551$ &$3$&$79$ \\
$3331$ &$5$&$41$ \\
$4111$ &$5$&$491$ \\
$5281$ &$3$&$715237$ \\
$6451$ &$3$&$152461$ \\
$6841$ &$5$&$61$ \\
$7621$ &$3$&$223$ \\
$8011$ &$5$&$41$ \\
$9181$ &$3$&$31$ \\
$11131$ &$5$&$31$ \\
$12301$ &$3$&$31$ \\
$14251$ &$3$&$5207827$ \\
$15031$ &$3$&$199$ \\
$16981$ &$3$&$7394137$ \\
$17761$ &$3$&$379$ \\
$18541$ &$3$&$79$ \\
$20101$ &$3$&$37$ \\
\hline
\end{tabular}
   \caption{}
\label{tab:mytable}

\end{minipage}
\label{tab:tables}
\end{table}
This shows that for any $p_6$, we are getting some $p^{*}\not\in$ \{3, $p_1=5$, $p_2$,..., $p_6$\} such that $p^*\mid \sigma(p_6^{2a_6})$. In other words $\sigma(p_6^{2a_6})$ has other prime factors than $3$, $p_1=5$, $p_2$,..., $p_6$ which is absurd by Remark \ref{re 3.1}. Therefore $N$ cannot be a friend of $10$.
This proves that $N$ cannot have exactly $6$ distinct prime factors.

 The last part of the proof now follows from Remark \ref{re 3.2} and Corollary \ref{coro 1.8} .  \qed

\begin{remark}
In Theorem \ref{thm 1.7}, we set $p=5$ and $q > 5$ then as for $r= 2, 3, 4$ we have no odd $f >1$ such that $r^f \equiv 1 \pmod 5$ hence the only choice for $r$ is 1, thus we get $q \equiv 1 \pmod{10}$ with $2l+1 \equiv 0 \pmod{5}$.
\end{remark}
\begin{remark}\label{remark3.4}
 From  Lemma \ref{lem1} it is clear that if $q \equiv 1 \pmod {10}$ then $o_{5}(q)=1$ and if $v_{5}(\sigma(q^{2l}))=v_{5}(2l+1)=r,$ then $2l+1 \equiv 0 \pmod {5^r}$.
\end{remark}

\subsection{Proof of Theorem \ref{thm 1.9}.} 
Let $N=5^{2a}m^2$ be a friend of $10$ with $\omega(N)=s$, where $a,m \in \mathbb{N}$. Let us assume that $m$ is a square free integer. Then we can write $N=5^{2a}\prod_{j=1}^{s-1}p_{j}^2$ by letting $m=\prod_{j=1}^{s-1}p_{j}$, where $p_j$ are primes grater than 5. Then
\begin{align*}
    I(N)=\frac{\sigma(N)}{N}=\frac{9}{5}
\end{align*}
implies
\begin{align}\label{5}
  \sigma(5^{2a})\prod_{j=1}^ {s-1}\sigma(p_{j}^2)=9\cdot 5^{2a-1}\cdot \prod_{j=1}^ {s-1}p_{j}^2.
\end{align}
Since $a\in\mathbb{N}$, $5^{2a-1}\geq 5$. Again, note that $\gcd$(5, $\sigma(5^{2a})$)=1. Thus, we get $5 \mid \sigma(p_{t}^2)$ for some $1\leq t\leq s-1$ with $p_{t}\equiv 1 \pmod {10}$. This is impossible by the Remark \ref{remark3.4} as the exponent of $p_t$ is $2$. Hence, $5\nmid\sigma(5^{2a})\prod_{j=1}^{ s-1}\sigma(p_{j}^2)$, which  is  absurd. Therefore $m$ can not be a square-free integer.
   \begin{remark}
    From theorem \ref{thm 1.9}, m must have at least one prime factor which has an exponent greater than 2, which implies a trivial lower bound for $\Omega(N).$ i.e; $\Omega(N)\geq 2\omega(N)+2$.
\end{remark}
    
\begin{lemma}\label{o1}
The minimum element in the set $\mathcal{L}_{2a-1,5}$ is $8a-4$.
  
\end{lemma}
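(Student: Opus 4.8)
The plan is to recognize this as a one-line convexity estimate applied termwise, with the minimum realized by the all-ones partition. First I would fix $n:=2a-1$ and write a typical element of $\mathcal{A}_{n,5}(r)$ as $\sum_{i=1}^{r}5^{c_i}-r=\sum_{i=1}^{r}\bigl(5^{c_i}-1\bigr)$, where $c_1,\dots,c_r\in\mathbb{N}$ satisfy $\sum_{i=1}^{r}c_i=n$. Since each part is at least $1$, the index $r$ ranges only over $1\le r\le n$, matching the union defining $\mathcal{L}_{n,5}$. The proof then splits into the two standard halves: a uniform lower bound $\sum_{i=1}^{r}\bigl(5^{c_i}-1\bigr)\ge 8a-4$ holding for every admissible $r$ and $(c_i)$, and the exhibition of a configuration meeting it.

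For the lower bound I would apply Lemma \ref{lem3.3} with base $5>e$: since $\psi(x)=5x-5^{x}$ is strictly decreasing on $[1,\infty)$ with $\psi(1)=0$, we get $5^{c}\ge 5c$ for every integer $c\ge 1$, with equality exactly at $c=1$. This upgrades to the termwise bound $5^{c_i}-1\ge 4c_i$, which telescopes cleanly: $\sum_{i=1}^{r}\bigl(5^{c_i}-1\bigr)\ge 4\sum_{i=1}^{r}c_i=4n=8a-4$. The appealing feature here is that the ``$-r$'' is absorbed into the per-part estimate, so the bound is automatically uniform in $r$ and no separate bookkeeping of the number of parts is needed.

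Finally I would verify attainment: the partition with $r=n$ and $c_1=\dots=c_n=1$ is admissible and gives $\sum_{i=1}^{n}5-n=5n-n=4n=8a-4$, so $8a-4\in\mathcal{L}_{n,5}$ and is therefore its minimum. The only point requiring care — and the step I would flag as the main obstacle — is the coupling between the two competing effects: subdividing a part raises $\sum 5^{c_i}$ (by the convexity in Lemma \ref{lem3.3}) but simultaneously increases the subtracted count $r$, so it is not obvious a priori that spreading the parts out cannot beat the all-ones configuration. The termwise inequality $5^{c_i}-1\ge 4c_i$ is precisely what resolves this tension, and tracing its equality case back through Lemma \ref{lem3.3} confirms that $c_i=1$ for all $i$ is exactly when the minimum $8a-4$ is achieved.
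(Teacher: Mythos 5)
Your proof is correct, and it follows the same underlying idea as the paper (the inequality $5c\le 5^c$ for $c\ge 1$ coming from Lemma \ref{lem3.3}), but your bookkeeping is both different and, in fact, more careful than the paper's. The paper first invokes Lemma \ref{lem3.4} to get the \emph{strict} bound $5(2a-1)<\sum_i 5^{c_i}$ and then subtracts $k\le 2a-1$ to conclude $8a-4<\sum_i 5^{c_i}-k$; taken literally this says every element of $\mathcal{L}_{2a-1,5}$ strictly exceeds $8a-4$, which cannot be reconciled with the claim that $8a-4$ \emph{is} the minimum (the strict inequality in Lemma \ref{lem3.4} fails precisely at the all-ones partition, where $\sum_i 5^{c_i}=5n$ exactly). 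The paper then only asserts attainment in a separate unproved Note. Your termwise estimate $5^{c_i}-1\ge 4c_i$, with equality exactly when $c_i=1$, absorbs the $-r$ into each summand, yields the non-strict lower bound $\sum_i(5^{c_i}-1)\ge 4(2a-1)=8a-4$ uniformly in $r$, and your explicit check that the all-ones partition achieves $8a-4$ completes the argument that this value is actually attained. So your version is the one that proves the lemma as stated; the paper's version needs the inequality in Lemma \ref{lem3.4} weakened to $\le$ (with the equality case identified) to be literally correct.
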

\begin{proof}
   If $\sum_{i=1}^{k}c_{i}=2a-1$, then from lemma \ref{lem3.4}, we get
   \begin{align}\label{0}
   5(2a-1)<\sum_{i=1}^{k}5^{c_i}.
   \end{align}
   Now since $k \leq 2a-1$, from (\ref{0}) we have
   \begin{align*}
   5(2a-1)-(2a-1)<\sum_{i=1}^{k}5^{c_i}-k.
   \end{align*}
   Thus, 
   
    \begin{align*}
   4(2a-1)<\sum_{i=1}^{k}5^{c_i}-k.
   \end{align*}
   This completes the prove.
\end{proof}

 \textbf{Note:}
     The minimum element in the set $\mathcal{L}_{2a-1,5}$ is from the set $\mathcal{A}_{2a-1,5}(2a-1).$   

    \subsection{Proof of Theorem \ref{thm 1.10}.}
    
 Let $N=5^{2a}\prod_{i=1}^{s-1}p_{i}^{2\gamma_{i}}$ be a friend of 10 with $\omega(N)=s$, then
 
 \begin{align*}
 \frac{\sigma(N)}{N}=\frac{9}{5}
 \end{align*}
 which implies,
 \begin{align}\label{o4.2}
 \sigma(5^{2a})\prod_{i=1}^{s-1}\sigma(p_{i}^{2\gamma_{i}})=9\cdot 5^{2a-1} \prod_{i=1}^{s-1}p_{i}^{2\gamma_{i}}
 \end{align}\\
 In the right hand side of (\ref{o4.2}), there are $2a-1$ numbers of 5. Now we consider
 the problem case by case.
 
\textbf{Case 1.} There is only one prime divisor of $N$ (without loss of generality, we may assume $p_{1}$) such that $$5^{2a-1}~\|~\sigma(p_{1}^{2\gamma_{1}})$$ i.e; $|\mathcal{E}_5(N)|=1$ and $p_1 \equiv 1 \pmod {10}$. Clearly, $v_{5}(\sigma(p_{1}^{2\gamma_{1}}))=2a-1$ where $2\gamma_{1}+1 \equiv 0 \pmod {5^{2a-1}}.$

   \textbf{Case 2.} There are two prime divisors of $N$ (without loss of generality, we may assume $p_{1}, p_{2}$) such that $$5^{2a-1}~\|~\sigma(p_{1}^{2\gamma_{1}})\sigma(p_{2}^{2\gamma_{2}})$$ i.e; $|\mathcal{E}_5(N)|=2$ and $p_1 \equiv p_2 \equiv 1 \pmod {10}$. Clearly, $v_{5}(\sigma(p_{1}^{2\gamma_{1}}))+ v_{5}(\sigma(p_{2}^{2\gamma_{2}}))=2a-1$, where $2\gamma_{i}+1 \equiv 0 \pmod {5^{v_{5}(\sigma(p_{i}^{2\gamma_{i}}))}}$ for $i=1, 2$ . 
   
   Continuing like this manner we have case $2a-2$.
   
\textbf{Case 2a-2.} There are $2a-2$ numbers of prime divisors of $N$ (without loss of generality, we may assume $p_{1}, p_{2}$, $p_{3}$,..., $p_{2a-2}$) such that $$5^{2a-1}~\|~\sigma(p_{1}^{2\gamma_{1}})\sigma(p_{2}^{2\gamma_{2}})\cdots \sigma(p_{2a-2}^{2\gamma_{2a-2}})$$ i.e; $|\mathcal{E}_5(N)|=2a-2$ and $p_1 \equiv p_2 \equiv \cdots \equiv p_{2a-2} \equiv 1 \pmod {10}$. Clearly, $\sum_{i=1}^{2a-2}v_{5}(\sigma(p_{i}^{2\gamma_{i}}))=2a-1$, where $2\gamma_{i}+1 \equiv 0 \pmod {5^{v_{5}(\sigma(p_{i}^{2\gamma_{i}}))}}$ for $i=1, 2,..., 2a-2 $ . In this case, all $v_{5}(\sigma(p_{k_i}^{2\gamma_{i}}))=1$ except one $v_{5}(\sigma(p_{j}^{2\gamma_{j}}))$ which is 2.

      \textbf{Case 2a-1.} There are exactly  $2a-1$ numbers of prime divisors of $N$ (without loss of generality, we may assume $p_{1}$, $p_{2}$, $p_{3}$,..., $p_{2a-1}$) such that $$5^{2a-1}~\|~\sigma(p_{1}^{2\gamma_{1}})\sigma(p_{2}^{2\gamma_{2}})\cdots \sigma(p_{2a-1}^{2\gamma_{2a-1}})$$ i.e; $|\mathcal{E}_5(N)|=2a-1$ and $p_1 \equiv p_2 \equiv \cdots \equiv p_{2a-1} \equiv 1 \pmod {10}$. Clearly, $\sum_{i=1}^{2a-1}v_{5}(\sigma(p_{i}^{2\gamma_{i}}))=2a-1$, where $2\gamma_{i}+1 \equiv 0 \pmod {5^{v_{5}(\sigma(p_{i}^{2\gamma_{i}}))}}$ for $i=1, 2,..., 2a-1$. In this case, all $v_{5}(\sigma(p_{i}^{2\gamma_{i}}))=1.$
      
If we consider all the cases mentioned above and choose the minimum value of $\sum_{i=1}^{t}2\gamma_{i}$ such that $5^{2a-1}~\|~\sigma(p_{1}^{2\gamma_{1}})\sigma(p_{2}^{2\gamma_{2}})\sigma(p_{3}^{2\gamma_{3}})\cdots \sigma(p_{t}^{2\gamma_{t}})$ satisfying $\sum_{i=1}^{t}v_{5}(\sigma(p_{i}^{2\gamma_{i}}))=2a-1$, for $1\leq t \leq |\mathcal{E}_5(N)|=2a-1$ and while excluding the primes with exponent 2 that are not included in those cases, we can obtain the minimum possible value for $\Omega(N)$.

To find the minimum value of $\sum_{i=1}^{t}2\gamma_{i}$, it is enough to consider  $2\gamma_j=5^{v_{5}(\sigma(p_{j}^{2\gamma_{j}}))}-1$  since $2 \gamma_{i}+1 \equiv 0 \pmod{v_{5}(\sigma(p_{i}^{2\gamma_{i}}))}$, for $j=1, 2,..., t$.

A careful calculation gives the minimum value as following: 
\begin{align}\label{6}
\sum_{i=1}^{t}2\gamma_{i}=\sum_{i=1}^{t}\bigg(5^{v_{5}(\sigma(p_{i}^{2\gamma_{i}}))}-1\bigg)=\sum_{i=1}^{t}5^{v_{5}(\sigma(p_{i}^{2\gamma_{i}}))}-t.
\end{align}
 Now, the sum in (\ref{6}) is same as the minimum element in the set $\mathcal{L}_{2a-1,5}$.
 
  From lemma \ref{o1} we know that the minimum element in the set $\mathcal{L}_{2a-1,5}$ is $8a-4$. Hence, the minimum value of $\Omega(N)$ is $2\omega(N)+6a-4$ i.e.; $\Omega(N) \geq 2\omega(N)+6a-4$. This completes the proof.
  
\begin{remark}
Since we have $\Omega(N) \geq 2\omega(N)+6a-4$, now using the  completely additive property of $\Omega(N)$ we have $\Omega(5^{2a})+\Omega(m^2)=2a+ 2 \Omega(m)\geq 2\omega(N)+6a-4$ i.e; 
\begin{align}\label{ap}
\Omega(m) \geq \omega(N)+ 2a-1.
\end{align}
Finally, using additive property of $\omega(N)$, we have from (\ref{ap})

\begin{align}
\Omega(m) \geq \omega(m)+ 2a
\end{align}
\end{remark}
\subsection{Proof of Corollary \ref{coro 1.11}.} 
 Since $\Omega(m) \leq K$, we have from (\ref{ap}) $K-2a+1 \geq \omega(N)$.
Since $N$ is an odd integer with abundancy index $9/5$ so, it also called an odd $9/5$-perfect number. Now using lemma \ref{pn}, we have 
$N< 5\cdot 6^{{(2^{{\omega(N)}}-1})^2}< 5\cdot 6^{(2^{K-2a+1}-1)^2}$.

\section{Conclusion}

We can of course use this method to check whether $n$ has seven or more distinct prime factors or not and we can predict the properties of prime factors of $n$ using the proven theorems. $10$ is not the only number whose classification is unknown to us. In fact the status of $14$, $15$, $20$ and many others are active topics for research. It may be proving whether $10$ has a friend or not is as much as difficult as finding an odd perfect number. However computer search shows that if $10$ has a friend then its smallest friend will be bigger than $10^{30}$\cite{OEIS}. The list of values of $f_{p}^{q}$ used in proving Theorem \ref{thm 1.2} can be found below.
\clearpage
\begin{center}
\begin{table}[h]
\begin{tabular}{|*{6}{c|}}
\hline
$f_{31}^{5}=3$ & $f_{11}^{5}=5$ & $f_{71}^{5}=5$ & $f_{19}^{5}=9$ & $f_{829}^{5}=9$ & $f_{305175781}^{5}=13$ \\
\hline
$f_{191}^{5}=19$ & $f_{6271}^{5}=19$ & $f_{19}^{5}=9$ & $f_{8971}^{5}=23$ & $f_{59}^{5}=29$ & $f_{35671}^{5}=29$\\
\hline
$f_{211}^{5}=35$ & $f_{79}^{5}=39$ & $f_{131}^{5}=65$ & $f_{269}^{5}=67$ &$f_{1609}^{5}=67$   & $f_{139}^{5}=69$  \\
\hline
$f_{419}^{5}=209$  & $f_{3}^{7}=3$  &$f_{19}^{7}=3$ & $f_{9}^{7}=9$ & $f_{37}^{7}=9$ & $f_{5}^{11}=5$ \\
\hline
$f_{3221}^{11}=5$ & $f_{3}^{13}=3$ & $f_{61}^{13}=3$ & $f_{3}^{19}=3$ & $f_{127}^{19}=3$ & $f_{13}^{29}=3$ \\
\hline
$f_{67}^{29}=3$ & $f_{3}^{61}=3$ & $f_{97}^{61}=3$ & $f_{3}^{211}=3$ & $f_{37}^{211}=3$ & $f_{3}^{601}=3$ \\
\hline
$f_{9277}^{601}=3$ & $f_{3}^{991}=3$ & $f_{277}^{991}=3$ & $f_{5}^{1021}=5$ &$f_{41}^{1021}=5$&$f_{1451}^{1021}=5$ \\
\hline
$f_{3}^{1171}=3$  & $f_{65353}^{1171}=3$ & $f_{3}^{1231}=3$ & $f_{37}^{1231}=3$  & $f_{5}^{1381}=5$  & $f_{811}^{1381}=5$ \\
\hline
$f_{5}^{1531}=5$ & $f_{691}^{1531}=5$ & $f_{3}^{1621}=3$ &
$f_{9631}^{1621}=3$ & $f_{3}^{1951}=3$ & $f_{79}^{1951}=3$ \\
\hline
$f_{3}^{2011}=3$ & $f_{14821}^{2011}=3$ & $f_{3}^{2161}=3$ & $f_{119797}^{2161}=3$ & $f_{3}^{2341}=3$ & $f_{37}^{2341}=3$ \\
\hline
$f_{3}^{2551}=3$ &$f_{79}^{2551}=3$ & $f_{3}^{2731}=3$ & $f_{61}^{2731}=3$ & $f_{3}^{2791}=3$ & $f_{199807}^{2791}=3$ \\
\hline
$f_{5}^{3121}=5$ & $f_{521}^{3121}=5$ & $f_{5}^{3181}=5$ & $f_{61}^{3181}=5$ & $f_{5}^{3331}=5$ &$f_{41}^{3331}=5$ \\
\hline
$f_{5}^{3511}=5$ & $f_{61}^{3511}=5$ & $f_{5}^{3571}=5$ & $f_{101}^{3571}=5$ & $f_{5}^{4111}=5$ & $f_{491}^{4111}=5$ \\
\hline
$f_{3}^{5281}=3$ & $f_{715237}^{5281}=3$ & $f_{5}^{5521}=5$& $f_{71}^{5521}=5$ & $f_{5}^{5851}=5$ & $f_{181}^{5851}=5$ \\
\hline
$f_{5}^{6301}=5$ & $f_{31}^{6301}=5$ & $f_{3}^{6451}=3$ & $f_{152461}^{6451}=3$ & $f_{3}^{6691}=3$ & $f_{79}^{6691}=3$ \\
\hline
$f_{5}^{6841}=5$ & $f_{61}^{6841}=5$ & $f_{5}^{7411}=5$ & $f_{31}^{7411}=5$ & $f_{3}^{7621}=3$ & $f_{223}^{7621}=3$ \\
\hline
$f_{5}^{8011}=5$ & $f_{41}^{8011}=5$ & $f_{3}^{8191}=3$ & $f_{22366891}^{8191}=3$ & $f_{3}^{8581}=3$& $f_{31}^{8581}=3$\\
\hline
$f_{5}^{8641}=5$& $f_{3044081}^{8641}=5$ &$f_{3}^{8971}=3$ &$f_{271}^{8971}=3$ & $f_{3}^{9181}=3$&$f_{31}^{9181}=3$ \\
\hline
$f_{3}^{9421}=3$&$f_{631}^{9421}=3$ &$f_{3}^{10141}=3$&$f_{43}^{10141}=3$& $f_{3}^{10531}=3$&$f_{157}^{10531}=3$ \\
\hline
$f_{5}^{11131}=5$ &$f_{31}^{11131}=5$ &$f_{3}^{11311}=3$ & $f_{37}^{11311}=3$& $f_{3}^{11701}=3$&$f_{97}^{11701}=3$ \\
\hline
$f_{3}^{12301}=3$ & $f_{31}^{12301}=3$& $f_{5}^{12541}=5$&$f_{151}^{12541}=5$ &$f_{3}^{13711}=3$ & $f_{61}^{13711}=3$\\
\hline
$f_{3}^{14251}=3$ &$f_{5207827}^{14251}=3$ & $f_{3}^{14431}=3$&$f_{157}^{14431}=3$ &$f_{5}^{14821}=5$ &$f_{271}^{14821}=5$\\
\hline
$f_{3}^{15031}=3$& $f_{199}^{15031}=3$& $f_{3}^{15271}=3$&$f_{43}^{15271}=3$ & $f_{5}^{15601}=5$&$f_{127}^{15601}=5$ \\
\hline
$f_{3}^{15661}=3$& $f_{37}^{15661}=3$& $f_{5}^{15991}=5$&
 $f_{61}^{15991}=5$ & $f_{3}^{16381}=3$&$f_{1237}^{16381}=3$\\
 \hline
 $f_{3}^{16831}=3$ & $f_{109}^{16831}=3$& $f_{3}^{16981}=3$&$f_{7394137}^{16981}=3$ &$f_{3}^{17551}=3$ &$f_{31}^{17551}=3$ \\
 \hline
 $f_{3}^{17761}=3$& $f_{379}^{17761}=3$ & $f_{3}^{18541}=3$ & $f_{79}^{18541}=3$ & $f_{3}^{19501}=3$ & $f_{163}^{19501}=3$\\
 \hline
  $f_{3}^{19891}=3$ & $f_{331}^{19891}=3$ & $f_{3}^{20101}=3$ & $f_{37}^{20101}=3$ & $f_{3}^{20341}=3$&$f_{31}^{20341}=3$\\
  \hline
  $f_{5}^{20731}=5$ & $f_{251}^{20731}=5$ & &&&\\

\hline
\end{tabular}
\caption{List of $f_{p}^{q}$}
\end{table}
\end{center}

\section{Data Availablity} 	
The authors confirm that their manuscript has no associated data.

\section{Competing Interests}
The authors confirm that they have no competing interest.


\begin{thebibliography}{99}

\bibitem{CMM}
T. Chatterjee, S. Mandal and S. Mandal, {\it On necessary conditions for a friend of 20}, submitted.

\bibitem{wp}
 P. Erd\H os, {\it On the distribution of numbers of the form $\frac{\sigma(n)}{n}$ and on some related questions},
Pacific Journal of Mathematics 52 (1974), 59-65.

\bibitem{rl}

R. Laatsch, {\it Measuring the Abundancy of Integers},
Mathematics Magazine 59 (1986), 84-92.



\bibitem{nielsen2007odd}
P. Nielsen, {\it Odd perfect numbers have at least nine distinct prime factors}, Mathematics of computation, 76(260), (2007), 2109--2126.

\bibitem{pn}
P. Nielsen, {\it Odd perfect numbers, Diophantine equations, and upper bounds}, Mathematics of computation, 84(295), (2015), 2549--2567.

\bibitem{OEIS}

OEIS Foundation Inc., {\it{The Online Encyclopedia of Integer Sequences, Sequence A074902}}, Accessed October 2024. Available online at: \url{https://oeis.org/A074902}.


\bibitem{John}
J. Voigh, {\it On the non-existence of odd perfect numbers}. MASS selecta,Amer. Math. Soc.,
Providence, RI, 2003, 293--300.


\bibitem{ward2008does}
J. Ward, (2008). {\it Does Ten Have a Friend?}, Int. J. Math. Comput. Sci. 3 (2008), 153--158. 
\bibitem{paw}
P. A. Weiner, {\it The Abundancy Ratio, a Measure of Perfection}, Mathematics Magazine, 73 (2000), 307--310.



\end{thebibliography}
\end{document}